\theoremstyle{definition}
\numberwithin{equation}{section}
\title[Stationary Soap Film Bridges Formed by a Small Electrostatic Force]{Stationary Soap Film Bridge Formed by a Small Electrostatic Force}
\author{Lina Sophie Schmitz}
\date{September 27, 2024}
\address{Institut f\"ur Angewandte Mathematik, Leibniz Universit\"at Hannover, Welfengarten 1 \\ D-30167 Hannover, Germany}
\email{schmitz@ifam.uni-hannover.de}
\newtheoremstyle{common}
	{}    
	{}    
    {\itshape}
    {0em}
    {\bfseries}
    {}
    {.5em}
    {}
\theoremstyle{common}
\numberwithin{subsection}{section}
\numberwithin{figure}{section}
\newtheorem{thm}{{\bf Theorem}}
\numberwithin{thm}{section}
\newtheorem{lem}[thm]{{\bf Lemma}}
\newtheorem{cor}[thm]{{\bf Corollary}}
\newtheorem{prop}[thm]{{\bf Proposition}}
\numberwithin{equation}{section}
\newtheoremstyle{commondef}  
    {6pt}
    {6pt}
    {}
    {0em}
    {\bfseries}
    {}
    {.5em}
    {}
\theoremstyle{commondef}
\newtheorem{bem}[thm]{{\bf Remark}}
\newtheorem{bems}[thm]{{\bf Remarks}}
\renewenvironment{proof}{{\bf Proof}.}{\qed\\}
\begin{document}

\begin{abstract} We consider two models, a free boundary problem and a simplification thereof, which describe a soap film bridge subjected to an electrostatic force. For both models, we construct stationary solutions if the force is small, analyse their stability and examine how their shape is influenced by small changes in the strength of the force. 
\end{abstract}

\keywords{free boundary problem, qualitative properties, stability, surface tension, electrostatics}
\subjclass[2020]{35R35, 35B35, 34L10, 47J07, 35Q99}
\maketitle

\allowdisplaybreaks

\section{Introduction}

We study a tiny soap film bridge spanned between to parallel rings and placed inside a metal cylinder \cite{MP09}. A voltage is applied between the cylinder and the soap film which induces an electrostatic force pulling the film outwards, see Figure \ref{fig1}. In the following, we consider the problem for small voltages, and ask, in particular, how the film responds to an increase of the electrostatic force. We present rigorous answers within the framework of two models:

\subsection{Free Boundary Problem}

The first model is the stationary version of \cite{LSS24a}. It  reads
\begin{subequations}\label{eq11}
 \begin{align}
 \begin{cases}
 -\sigma\, \partial_z \mathrm{arctan}  ( \sigma\partial_z u )&=  -\displaystyle\frac{1}{u+1} +\lambda g(u) \\
  \qquad \qquad \qquad u(\pm 1 )&=0 \,, \qquad -1 < u <1\,,  
 \end{cases} \label{eq1}
\end{align}
with electrostatic force
\begin{align}
g(u):= (1+\sigma^2(\partial_z u)^2)^{3/2} \vert \partial_r \psi_u(z,u+1) \vert^2\,,\label{eq2.5}
\end{align}
where 
\begin{align}
\begin{cases}\displaystyle
\frac{1}{r} \partial_r \left ( r \partial_r \psi_u \right ) + \sigma^2 \partial^2_z \psi_u&= 0  \quad \ \text{in} \quad \Omega(u)\,,  \\
 \ \, \qquad \qquad \qquad \qquad \psi_u &= h_u \quad \text{on} \quad \partial \Omega(u) \,,
\end{cases} \label{eq2}
\end{align}
and
\begin{align}
h_u(z,r) := \frac{\ln \Big (\displaystyle\frac{r}{u(z)+1} \Big )}{\ln \Big (\displaystyle\frac{2}{u(z)+1} \Big )}\,. \label{eq3}
\end{align} 
\end{subequations}
 Herein, $u+1$ with $u=u(z): (-1,1) \rightarrow (-1,1)$ gives the profile of the soap film bridge, 
$\Omega(u) = \big \lbrace (z,r) \in (-1,1) \times (0,2) \, \vert \, u(z)+1 < r < 2 \, \big \rbrace$ is the domain between cylinder and film, and
$\psi_u=\psi_u(z,r): \overline{\Omega(u)} \rightarrow \mathbb{R}$ is the electrostatic potential. The subproblem \eqref{eq2} is always considered in dependence on $u$, and the boundary condition \eqref{eq3} results from neglecting the fringing field. The parameter $\sigma$ gives the ratio of radii of the rings divided by their distance, and $\lambda \in [0,\infty)$ gives the strength of the applied voltage. The problem \eqref{eq11} is related to the class of models in \cite{LW17}, in particular to  \cite{ELW14,ELW15} therein.

 \begin{figure}[h]
\begin{subfigure}[l]{0.4\textwidth}
\begin{tikzpicture}
\begin{axis}[samples=200, axis x line=none, axis y line=none, domain=-1:1, clip=false]
\addplot[color=blue, thick]({0.7*x},{-1+0.17*(1-x^2)^0.5});
\addplot[color=blue, thick]({0.7*x},{1+0.17*(1-x^2)^0.5});
\addplot[color=purple, thick]({1.1*x},{1+0.3*(1-x^2)^0.5});
\addplot[color=purple!40, thick]({1.1*x},{-1+0.3*(1-x^2)^0.5});
\addplot[->, domain=-1.2:1.5]({0},{x});
\addplot[domain=-0.02:0.02]({x},{-1});
\addplot[domain=-0.02:0.02]({x},{1});

\addplot[color=purple, thick]({-1.1},{x});
\addplot[color=purple, thick]({1.1},{x});
\addplot[->, domain=-1.2:1.5]({0},{x});

\addplot[color=blue, thick]({cosh(x)/cosh(1)-0.3},{x});
\addplot[color=blue, thick]({-cosh(x)/cosh(1)+0.3},{x});

\addplot[color=blue,thick]({0.7*x},{(1-0.17*(1-x^2)^0.5)});
\addplot[color=blue, thick]({0.7*x},{-1-0.17*(1-x^2)^0.5});

\addplot[color=purple,thick]({1.1*x},{(1-0.3*(1-x^2)^0.5)});
\addplot[color=purple, thick]({1.1*x},{-1-0.3*(1-x^2)^0.5});

\end{axis}

\draw [->, bend angle=45, bend right]  (-0.2,4) to (0.4,3.6);
\node[text width=3.5cm] at (0,4.7) {\begin{small} metal cylinder \\ held at positive\\ potential \end{small}};

\draw [->, bend angle=30, bend left]  (0,2.7) to (2.3,2);
\node[text width=3.5cm] at (0,2.7) {\begin{small} soap film \\ \ held at \\ \ potential $0$ \end{small}};

\end{tikzpicture}
\caption{ Soap film bridge placed inside a metal cylinder.} 
\end{subfigure}
\hfill
\begin{subfigure}[r]{0.4\textwidth}
\vspace{1cm}
\begin{tikzpicture}
\fill[white,rounded corners] (-3.5,-0.8) rectangle (3.3,3.5);
\draw[domain=-2.45:2.45, smooth, variable=\z, blue, thick] plot ({\z},{cosh(0.5*\z)-0.3});
\draw[purple, thick] (-2.45,3) -- (2.45,3) ;
\draw[dashed](2.45,0) -- (2.45,3) ;
\draw[dashed] (-2.45,3) -- (-2.45,0) ;
\draw[dashed] (-2.45,3) -- (-2.45,0) ;
\draw[dashed] (-2.45,3) -- (-2.45,0) ;
\draw[->] (-3,0) -- (3,0) node[right] {$z$};
\draw (2.45,0.1) -- (2.45,-0.1) node[below] {$1$};
\draw (-2.45,0.1) -- (-2.45,-0.1) node[below] {$-1$};
\draw[->] (-2.9,-0.1) -- (-2.9,3.2) node[right] {$r$};
\draw (-2.8,1.5) -- (-3,1.5) node[left] {$1$};
\draw (-2.8,3) -- (-3,3) node[left] {$2$};
\node[above] at (0,1.7) {$\Omega(u)$};
\draw[->, blue, thick] (1.4,0) -- (1.4,0.9);
\node[blue, left] at (1.3,0.4) {$u+1$};
\draw [<-, bend angle=45, bend left]  (-0.8,3) to (-1,2.6) node[left]{$\psi_u=1$};
\draw [<-, bend angle=45, bend left]  (0.8,0.86) to (1,1.7) node[right]{$\psi_u=0$};
\end{tikzpicture}
\caption{Cross section of the soap film bridge with profile $u+1$ inside a metal cylinder.}
\end{subfigure}
\caption{Depiction of the problem set-up and its cross section.}\label{Cross Section}\label{fig1}
\end{figure}
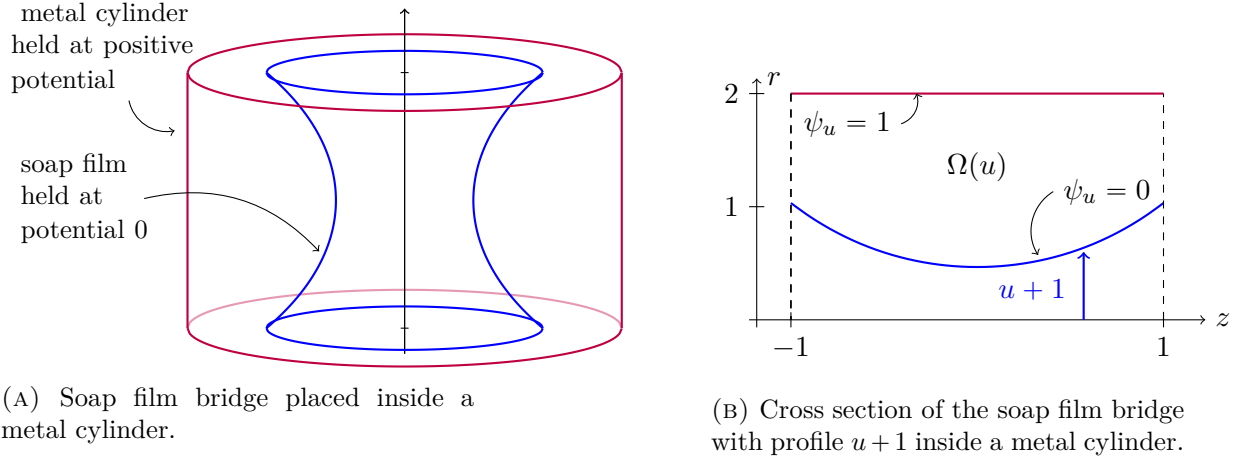

\subsection{Small Aspect Ratio Model}\label{Simplified Model} The second model
\begin{subequations}\label{A.56}
\begin{align}
 \begin{cases}\displaystyle
  \quad\   -\sigma\, \partial_z \mathrm{arctan}  ( \sigma\partial_z u )&=  -\displaystyle\frac{1}{u+1} +\lambda\,g_{sar}(u)\,, \\
  \qquad \qquad \qquad u( \pm 1 )&=0 \,, \qquad -1 < u <1\,, 
 \end{cases} \label{A.5}
\end{align}
with explicitly given electrostatic force 
\begin{align}
g_{sar} (u):= (1+\sigma^2(\partial_z u)^2 )^{1/2}\,\frac{1}{(u+1)^2 \ln^2 \Big (\displaystyle \frac{2}{u+1} \Big )} \label{A.6}
\end{align}
\end{subequations}
has been introduced in \cite{MP09}. The subscript $sar$ in $g_{sar}$ stands for small aspect ratio, as \eqref{A.56} is derived under the assumption that the gap between the rings on which the soap film is spanned and the cylinder compared to the distance of the rings is small. In contrast to \eqref{eq11}, the small aspect ratio model \eqref{A.56} consists of a singular ordinary differential equation in which $\psi_u$ is eliminated. For a derivation of \eqref{A.56} from (a slightly more general variant) of \eqref{eq11}, see \cite[Appendix B]{LSS24}. \\

Note that for the dynamical version of \eqref{eq11} and \eqref{A.56} the first lines in \eqref{eq1} and \eqref{A.5} have to be replaced by
\begin{align}
\partial_t u - \sigma \partial_z \mathrm{arctan} (\sigma \partial_z u) = - \frac{1}{u+1} + \lambda g_*(u) \label{dyn}
\end{align} 
with $g_*=g$ or $g_*=g_{sar}$ respectively. Also, an initial value $u_0$ is required.

\subsection{Catenoids as Stationary Solution} 
Our investigation starts with stationary solutions in absence of an electrostatic force, i.e. for $\lambda=0$. In this case, \eqref{eq11} and \eqref{A.56} coincide and become a minimal surface equation
\begin{align}
\begin{cases}
  &-\sigma \partial_z \mathrm{arctan} (\sigma \partial_z u) = - \displaystyle\frac{1}{u+1}\,,\\
  &u(\pm1)=0 \,, \quad -1 <u < 1\,.
\end{cases} \label{statlambda10}
\end{align}
It is well-known, see \cite[p.\,282]{JLJ98}, that there exists $\sigma_{crit}>0$ such that \eqref{statlambda10} has:
\begin{itemize}
 \item no solution for $\sigma < \sigma_{crit}$\,,
 \item exactly one solution for $\sigma=\sigma_{crit}$\,,
 \item exactly two solutions for $\sigma > \sigma_{crit}$\,.
 \end{itemize}
The critical value is $\sigma_{crit}= {\frac{\mathrm{cosh}(c_{crit})}{c_{crit}}} \approx 1.5$ with $c_{crit} \approx 1.2$ being the solution to
\begin{align}
c_{crit} \mathrm{sinh}(c_{crit})-\mathrm{cosh}(c_{crit}) =0 \,. \label{Defccrit}
\end{align}
Each solution to \eqref{statlambda10} is a (translated) catenoid 
\begin{align}
u_*(z) := \frac{\mathrm{cosh}(cz)}{\mathrm{cosh}(c)\ } - 1 \,, \qquad z \in (-1,1)\,, \label{catenoid}
\end{align} 
where $c>0$ satisfies 
\begin{align}\sigma = \frac{\mathrm{cosh}(c)}{c}\,. \label{261023b} 
\end{align}
For $\sigma > \sigma_{crit}$, there are two solutions $c=c_{in}$ and $c=c_{out}$ to \eqref{261023b} with
\begin{align}
c_{out} < c_{crit} < c_{in}\,, \label{crel}
\end{align}
resulting in an inner catenoid $u_{in}$ for $c_{in}$ and an outer catenoid $u_{out}$ for $c_{out}$ with $u_{out} > u_{in}$ in $(-1,1)$.

\subsection{Main Results}

Concerning the free boundary problem \eqref{eq11} we prove three main results. First, we show the existence of at least two stationary solutions for small $\lambda >0$ and $\sigma > \sigma_{crit}$:

\begin{thm}\label{exstatsL}{\bf(Existence)}\\
Let $q \in (2,\infty)$ and $\sigma > \sigma_{crit}$. Then, there exists $\delta =\delta(\sigma)> 0$ and analytic functions 
\begin{align*}
[\lambda \mapsto u^{\lambda}_{in}] &: [0,\delta) \rightarrow W^2_{q,D}(-1,1) \,, \ \qquad u_{in}^0=u_{in}\,, \\
[\lambda \mapsto u^{\lambda}_{out}] &: [0,\delta) \rightarrow W^2_{q,D}(-1,1) \,, \qquad u_{out}^0=u_{out}
\end{align*}
such that $u^\lambda_{in}$ and $u^\lambda_{out}$ are two different solutions to \eqref{eq11} for each $\lambda \in (0,\delta)$. Moreover, $u^\lambda_{in}$ and $u^\lambda_{out}$ as well as the corresponding electrostatic
potentials $\psi_{u^\lambda_{in}} \in W^2_2\big (\Omega(u^\lambda_{in})\big)$ and $\psi_{u^\lambda_{out}} \in W^2_2\big (\Omega(u^\lambda_{out})\big)$ are symmetric with respect to the $r$-axis.
\end{thm}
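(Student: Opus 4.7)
The plan is to apply the analytic implicit function theorem to the nonlinear map
\[
F(\lambda, u) := -\sigma\,\partial_z \arctan(\sigma\,\partial_z u) + \frac{1}{u+1} - \lambda\, g(u)\,,
\]
regarded as an operator $F : [0,\infty) \times \mathcal{V} \to L^q(-1,1)$, where $\mathcal{V}$ is the open subset of the closed subspace $W^2_{q,D,\mathrm{sym}}(-1,1) := \{u \in W^2_{q,D}(-1,1) \,:\, u(-z)=u(z)\}$ consisting of those $u$ with $-1 < u < 1$. Since $q > 2$, elements of $\mathcal{V}$ embed into $C^{1,\alpha}$, so $\Omega(u)$ is a Lipschitz (in fact $C^{1,\alpha}$) domain, the subproblem \eqref{eq2}-\eqref{eq3} admits a unique solution $\psi_u \in W^2_2(\Omega(u))$, and the trace term defining $g(u)$ lies in $L^q(-1,1)$. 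Working from the outset in the symmetric subspace is natural because the whole problem is equivariant under $z \mapsto -z$ and both reference catenoids $u_{in}, u_{out}$ are even; this simultaneously yields the claimed symmetry of $u^\lambda_{*}$ and, via the uniqueness statement for $\psi_u$, of the associated potentials.

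Two ingredients must then be established: (i) $F$ is real-analytic on $[0,\infty) \times \mathcal{V}$, and (ii) for $u_* \in \{u_{in}, u_{out}\}$ the partial derivative $\partial_u F(0,u_*)$ is an isomorphism of $W^2_{q,D,\mathrm{sym}}(-1,1)$ onto $L^q(-1,1)$. For (i) the delicate point is the analyticity of $u \mapsto g(u)$, since \eqref{eq2} is posed on the $u$-dependent domain $\Omega(u)$. The standard device is to pull \eqref{eq2}-\eqref{eq3} back to the fixed reference rectangle by a diffeomorphism depending analytically on $u$, e.g.\ $(z,r) \mapsto (z,(r-u(z)-1)/(1-u(z)))$; the transformed operator is uniformly elliptic with coefficients that are analytic functions of $u \in \mathcal{V}$, so analyticity of the solution map and of the relevant boundary trace yields $[u \mapsto g(u)] : \mathcal{V} \to L^q(-1,1)$ analytic. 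The remaining terms in $F$ are analytic by analyticity of $\arctan$ and $r \mapsto 1/(r+1)$ together with standard Nemytskii-type results in $W^2_q$.

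For (ii), differentiation gives the regular Sturm-Liouville operator
\[
L v = -\sigma\,\partial_z\!\left(\frac{\sigma\,\partial_z v}{1 + \sigma^2(\partial_z u_*)^2}\right) - \frac{v}{(u_*+1)^2}\,.
\]
Using \eqref{261023b} and \eqref{catenoid}, the coefficient $1 + \sigma^2(\partial_z u_*)^2$ simplifies to $\cosh^2(cz)$ and $(u_*+1)^2$ to $\cosh^2(cz)/\cosh^2(c)$, so $L$ is Fredholm of index zero on $W^2_{q,D,\mathrm{sym}}(-1,1)$. It therefore suffices to verify triviality of the kernel. A direct reduction-of-order computation (starting from the odd solution $\sinh(cz)$) shows that the general solution of $L v = 0$ is spanned by $\sinh(cz)$ and $z\sinh(cz) - \cosh(cz)/c$; imposing evenness and $v(\pm 1) = 0$ forces $v \equiv 0$ unless $c\sinh(c) - \cosh(c) = 0$, i.e.\ unless $c = c_{crit}$ by \eqref{Defccrit}. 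Since $c_{in}, c_{out} \neq c_{crit}$ by \eqref{crel}, $L$ is an isomorphism at both catenoids.

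With (i) and (ii) in hand, the analytic implicit function theorem produces, for each starting point $(0, u_*)$, an analytic branch $[\lambda \mapsto u_*^\lambda]:[0,\delta)\to W^2_{q,D,\mathrm{sym}}(-1,1)$ solving $F(\lambda, u_*^\lambda) = 0$ with $u_*^0 = u_*$; possibly shrinking $\delta$, the two branches remain disjoint because $u_{in} \neq u_{out}$. The regularity $\psi_{u_*^\lambda} \in W^2_2(\Omega(u_*^\lambda))$ is built in through the solution operator for \eqref{eq2}, and the symmetry assertions follow from the symmetric ambient space. The main technical obstacle is the analyticity of $u \mapsto g(u)$ in step (i): it hinges on showing uniform ellipticity of the pulled-back problem for all $u \in \mathcal{V}$ and on carefully controlling the trace appearing in \eqref{eq2.5}, an argument that parallels and builds on the analysis in \cite{LSS24a}. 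The Sturm-Liouville step (ii) is by comparison an elementary ODE computation, made clean by the $\sigma = \cosh(c)/c$ simplification.
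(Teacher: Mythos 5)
Your proposal is correct and is recognisably the same argument as the paper's, but with one organisational difference worth noting. The paper applies the implicit function theorem in the full space $W^2_{q,D}(-1,1)$, using the explicit general solution $C_2\sinh(cz)-C_1\big(cz\sinh(cz)-\cosh(cz)\big)$ (the shooting method of \cite{Moulton08}, equivalent to your reduction-of-order basis) to show triviality of $\ker DF(u_*)$, and then establishes the claimed symmetry of $u_{in}^\lambda, u_{out}^\lambda$ and $\psi_{u_*^\lambda}$ in a separate step by citing \cite[Theorem~1.1]{LSS24b}; the analyticity of $g$ is likewise delegated to \cite[Proposition~3.1]{LSS24b}. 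You instead work from the outset in the even subspace $W^2_{q,D,\mathrm{sym}}$, using the $z\mapsto -z$ equivariance of the problem, so that symmetry of $u_*^\lambda$ (and, via uniqueness for \eqref{eq2}, of $\psi_{u_*^\lambda}$) is automatic. That buys you a cleaner symmetry statement at the price of having to track the equivariance through the Fredholm and IFT steps.

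On that last point there is a small imprecision you should fix: you declare the codomain of $F$ (and of the linearisation $L=\partial_u F(0,u_*)$) to be $L^q(-1,1)$, while taking the domain to be $W^2_{q,D,\mathrm{sym}}$. Since the coefficients of $L$ are even, $L$ maps even functions to even functions, so its range lies in the even part $L^q_{\mathrm{sym}}(-1,1)$; as a map into all of $L^q(-1,1)$ it is neither surjective nor Fredholm (the cokernel contains the odd subspace). The correct assertion is that $L:W^2_{q,D,\mathrm{sym}}\to L^q_{\mathrm{sym}}$ is Fredholm of index zero (e.g.\ via the splitting of the full-space operator into even and odd blocks, each a well-posed boundary value problem on $(0,1)$), and your kernel computation then shows it is an isomorphism for $c\neq c_{crit}$. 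With the codomain adjusted to $L^q_{\mathrm{sym}}$ throughout, the IFT hypotheses are satisfied and the rest of the argument goes through as written.
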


Here, $W^2_{q,D}(-1,1)$ consists of Sobolev functions with zero trace. Theorem \ref{exstatsL} follows from the implicit function theorem and the proof is contained in Section \ref{SectionStationary1}. We refer to \cite{ELW14, ELW15} for previous results on related models.  As a second result, we provide details on stability of stationary solutions to \eqref{eq11} under rotationally invariant perturbations in the presence of a small voltage:

\begin{thm}{\bf(Stability)}\label{stability}\\
 Let $q \in (2,\infty)$ and $\sigma > \sigma_{crit}$. Then, there exists $\delta=\delta(\sigma)> 0$ such that for each $\lambda \in [0,\delta)$: \\
 \vspace{1mm}
 {\bf(i)}\,The stationary solution $u^\lambda_{in}$ to \eqref{eq11} is unstable in $W^2_{q,D}(-1,1)$.\\
 \vspace{1mm}
 {\bf(ii)}\,The stationary solution $u^\lambda_{out}$ to \eqref{eq11} is exponentially asymptotically stable in $W^2_{q,D}(-1,1)$. More precisely, 
 there exist numbers $\omega_0, m, M > 0$ such that for each initial value $u_0 \in W^2_{q,D}(-1,1)$ with $\Vert u_0 - u^\lambda_{out} \Vert_{W^2_{q,D}} < m$, the solution $u$ to the dynamical version of \eqref{eq11}, see \eqref{dyn}, exists globally in time and the estimate
 \begin{align*}
  \Vert u(t)-u^\lambda_{out} \Vert_{W^2_{q,D}(-1,1)} &+ \Vert \partial_t u(t) \Vert_{L_q(-1,1)} \leq M \, e^{-\omega_0 t} \Vert u_0 -u^\lambda_{out} \Vert_{W^2_{q,D}(-1,1)} 
 \end{align*}
 holds for $t \geq 0$.
\end{thm}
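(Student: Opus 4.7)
The plan is to apply the classical principle of linearised (in)stability for quasilinear parabolic problems. Writing the dynamical version \eqref{dyn} of \eqref{eq11} abstractly as $\partial_t u = F(\lambda, u)$ with
\begin{equation*}
F(\lambda, u) := \sigma\,\partial_z\mathrm{arctan}(\sigma\,\partial_z u) - \frac{1}{u+1} + \lambda\,g(u),
\end{equation*}
the Fréchet derivative $A_\lambda := F_u(\lambda, u^\lambda_*)$ (for $* = in$ or $* = out$) is a uniformly elliptic second-order operator on $L_q(-1,1)$ with domain $W^2_{q,D}(-1,1)$, perturbed by the $\lambda$-small, generally nonlocal term $\lambda\,\partial_u g(u^\lambda_*)$ stemming from the coupling to $\psi_u$. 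Standard quasilinear theory then shows that $A_\lambda$ is sectorial, generates an analytic semigroup on $L_q(-1,1)$, and has compact resolvent, so $\sigma(A_\lambda)$ is a discrete set of eigenvalues.

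The heart of the proof is the location of the spectrum of $A_0$ at the two catenoids from \eqref{catenoid}. Here $A_0$ is essentially the Jacobi operator of the catenoid in the chosen parametrisation and, after an explicit substitution, reduces to a regular Sturm-Liouville problem on $(-1,1)$ whose condition for a zero eigenvalue is precisely \eqref{Defccrit}. A direct computation then yields, for $\sigma > \sigma_{crit}$, that the spectrum of $A_0$ at $u_{out}$ lies strictly in the left half-plane with a spectral bound $-2\omega_0 < 0$, while at $u_{in}$ at least one strictly positive eigenvalue is present. This is consistent with the well-known saddle-node structure at $\sigma = \sigma_{crit}$, where the two catenoids collide and a zero eigenvalue crosses the origin (cf.\ \eqref{crel}), and with the variational fact that $u_{out}$ is a strict local minimiser of the area functional whereas $u_{in}$ is a strict saddle.

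To propagate these $\lambda = 0$ spectral properties to small $\lambda > 0$, I would use the analyticity of $\lambda \mapsto u^\lambda_*$ supplied by Theorem \ref{exstatsL}, together with the smoothness of $u \mapsto g(u)$, to conclude that $\lambda \mapsto A_\lambda$ is continuous in the resolvent sense. Standard spectral perturbation theory for isolated parts of the spectrum then ensures that, after shrinking $\delta$ if necessary, the spectral bound of $A_\lambda$ at $u^\lambda_{out}$ stays below $-\omega_0$ and a positive eigenvalue of $A_\lambda$ at $u^\lambda_{in}$ persists for $\lambda \in [0, \delta)$. Assertion (ii) then follows from the principle of linearised stability, which simultaneously yields global-in-time existence for $u$ and the claimed exponential $W^2_{q,D}$-decay with constants $\omega_0, m, M$. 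Assertion (i) follows from the companion instability theorem, which produces an unstable manifold through $u^\lambda_{in}$.

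The main obstacle will be the identification and control of the Fréchet derivative of the electrostatic term $u \mapsto g(u)$ in \eqref{eq2.5}, since the domain $\Omega(u)$ on which $\psi_u$ is defined depends on $u$ itself. The required shape-derivative argument transports $\psi_u$ to the fixed reference domain $\Omega(u^\lambda_*)$ by a $u$-dependent diffeomorphism, differentiates the resulting elliptic problem with respect to $u$, and then traces the derivative back to the moving boundary $r = u + 1$. Establishing uniform $W^2_2$-regularity of $\psi_u$ in a $W^2_{q,D}$-neighbourhood of $u^\lambda_*$ is what makes $\lambda\,\partial_u g(u^\lambda_*)$ a bounded, relatively compact perturbation of operator norm $o(1)$ as $\lambda \to 0$, and hence is the technical heart of the spectral perturbation argument.
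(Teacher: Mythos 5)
Your overall strategy matches the paper's: linearize the dynamical version \eqref{dyn} around $u^\lambda_*$, check that the linearization generates an analytic semigroup, locate its spectrum at $\lambda=0$, propagate the spectral gap to small $\lambda>0$ by perturbation, and then invoke the principle of linearized (in)stability in the form of \cite[Theorems 9.1.2, 9.1.3]{Lunardi95}. The reduction to a Sturm--Liouville problem and the perturbation step are also as in the paper.

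However, there is a genuine gap at the crux of the argument. You state that ``a direct computation then yields'' that the linearization at $u_{out}^0$ has spectrum in the open left half-plane while the linearization at $u_{in}^0$ has a strictly positive eigenvalue, and you support this by appealing to ``the variational fact that $u_{out}$ is a strict local minimiser of the area functional whereas $u_{in}$ is a strict saddle.'' The first half of this is indeed accessible by a direct argument: Remark \ref{CoV} notes that the comparison principle for Sturm--Liouville eigenvalues, or the second variation of the surface energy, both give $\mu_0(c_{out})<0$. But the same remark explicitly points out that \emph{neither} of these approaches yields $\mu_0(c_{in})>0$. The inner catenoid being a ``strict saddle'' is precisely what has to be proved, and it is not a direct computation. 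This is why the paper develops the eigencurve argument of Proposition \ref{AnEigKur}: it shows via the shooting function $D(c,\mu)$ that the first eigencurve $c\mapsto\mu_0(c)$ is smooth, has $c_{crit}$ as its unique zero, and satisfies $\mu_0'(c_{crit})>0$, from which $\mu_0(c_{in})>0>\mu_1(c_{in})$ follows because $c_{in}>c_{crit}$. That proposition is the technical heart of the stability proof and is what your proposal is missing. (A smaller point: for the instability conclusion you also need the spectrum of $A_\lambda$ to avoid a full strip about the imaginary axis; your observation that the resolvent is compact supplies this, but it should be said explicitly, as the paper does via \cite[Theorem 6.29]{Kato95}.) Conversely, your emphasis on the shape-derivative analysis of $g$ as the technical heart is misplaced for this particular theorem: the smoothness and $\lambda$-smallness of $Dg(u^\lambda_*)$ are imported from \cite[Proposition 3.1]{LSS24b}, and the perturbation step is then routine.
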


We prove Theorem \ref{stability} in Section \ref{SectionStationary2}, where we roughly follow \cite{ELW14,ELW15} and apply the principle of linearized stability. Next, we show that the stable stationary solutions $u^\lambda_{out}$ stemming from $u_{out}$ are deflected outwards for small $\lambda$. 

\begin{thm}{\bf (Direction of Deflection in \eqref{eq11})}\label{DeflectionOut} \\
For fixed $\sigma>\sigma_{crit}$, there exists $\delta >0$ such that 
\begin{align*}
u_{out}^{\overline{\lambda}} (z) < u_{out}^{\lambda}(z) \,, \qquad 0 \leq \overline{\lambda} < \lambda < \delta \,, \quad z \in (-1,1)\,.
\end{align*}
\end{thm}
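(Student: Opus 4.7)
The idea is to exploit the analytic dependence from Theorem \ref{exstatsL}. The map $\lambda \mapsto u^\lambda_{out}$ takes values in $W^2_{q,D}(-1,1) \hookrightarrow C^1([-1,1])$ analytically on $[0,\delta)$, so its derivative
\begin{equation*}
v_\mu := \partial_\lambda u^\lambda_{out}\big|_{\lambda=\mu} \in W^2_{q,D}(-1,1) \hookrightarrow C^1([-1,1])
\end{equation*}
depends continuously on $\mu$. Once $v_\mu > 0$ pointwise on $(-1,1)$ is established for all sufficiently small $\mu \geq 0$, the desired strict monotonicity follows at once from
\begin{equation*}
u^\lambda_{out}(z) - u^{\bar\lambda}_{out}(z) = \int_{\bar\lambda}^\lambda v_\mu(z)\, d\mu, \qquad z \in (-1,1).
\end{equation*}

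\textbf{Linearization at $\lambda = 0$.} Differentiating the stationary equation \eqref{eq1} with respect to $\lambda$ at $\lambda = 0$, where $u^0_{out} = u_{out}$, produces the linear two-point boundary value problem
\begin{equation*}
L_0 v_0 := -\partial_z\!\left(\frac{\sigma^2\, \partial_z v_0}{1 + \sigma^2(\partial_z u_{out})^2}\right) - \frac{v_0}{(u_{out}+1)^2} = g(u_{out}), \qquad v_0(\pm 1) = 0.
\end{equation*}
Although the zero-order coefficient of $L_0$ is negative, the exponential stability of $u_{out}$ at $\lambda=0$ recorded in Theorem \ref{stability} forces the first Dirichlet eigenvalue of the Sturm--Liouville operator $L_0$ to be strictly positive. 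For a one-dimensional second-order operator, this is well known to entail invertibility of $L_0$ together with strict positivity of its Green's function $G_0$ on $(-1,1)^2$, so that
\begin{equation*}
v_0(z) = \int_{-1}^1 G_0(z, y)\, g(u_{out})(y)\, dy.
\end{equation*}

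\textbf{Strict positivity of $g(u_{out})$.} From \eqref{eq2.5},
\begin{equation*}
g(u_{out})(z) = \bigl(1 + \sigma^2(\partial_z u_{out})^2\bigr)^{3/2} \bigl|\partial_r \psi_{u_{out}}(z, u_{out}(z)+1)\bigr|^2 \geq 0.
\end{equation*}
To exclude $g(u_{out}) \equiv 0$, I would invoke that $\psi_{u_{out}}$ solves the elliptic equation \eqref{eq2} in the smooth domain $\Omega(u_{out})$ with the non-constant boundary datum \eqref{eq3}, namely $0$ on the catenoid $r = u_{out}(z)+1$ and $1$ on the cylinder $r = 2$. The strong maximum principle then gives $0 < \psi_{u_{out}} < 1$ inside $\Omega(u_{out})$, and Hopf's boundary point lemma, applicable because the catenoid boundary is $C^\infty$ and therefore satisfies an interior sphere condition, yields $\partial_r \psi_{u_{out}}(z, u_{out}(z)+1) > 0$ for every $z \in (-1,1)$. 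Hence $g(u_{out}) > 0$ on $(-1,1)$, the integral representation above forces $v_0 > 0$ on $(-1,1)$, and the standard Hopf-type behaviour of $G_0$ at the endpoints additionally delivers $v_0'(-1) > 0$ and $v_0'(1) < 0$.

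\textbf{Propagation to small $\mu > 0$ and main obstacle.} Since $\mu \mapsto v_\mu$ is continuous into $C^1([-1,1])$, I would fix a small $\eta > 0$ and split $[-1,1]$ into the interior $[-1+\eta, 1-\eta]$, where positivity of $v_0$ admits a uniform lower bound that persists under $C^0$-closeness of $v_\mu$ to $v_0$, and the two boundary strips $[-1,-1+\eta]$ and $[1-\eta,1]$, where the strict signs $v_0'(-1)>0$ and $v_0'(1)<0$ combined with $v_\mu(\pm 1) = 0$ and $C^1$-closeness of $v_\mu$ to $v_0$ control the sign of $v_\mu$. Shrinking $\delta$ accordingly then yields $v_\mu > 0$ on $(-1,1)$ for every $\mu \in [0,\delta)$, and the integral identity from the first paragraph completes the proof. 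The main technical hurdle lies in the application of Hopf's boundary point lemma for the anisotropic operator $\tfrac{1}{r}\partial_r(r\partial_r \cdot) + \sigma^2 \partial_z^2$ on the curved inner boundary $r = u_{out}(z)+1$ of $\Omega(u_{out})$; once this is secured, the remainder reduces to one-dimensional Sturm--Liouville Green's function analysis together with the continuity and analyticity afforded by the IFT construction in Theorem \ref{exstatsL}.
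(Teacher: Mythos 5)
Your proposal is correct and follows essentially the same skeleton as the paper's proof: compute $\partial_\lambda u^0_{out}=-\bigl(DF(u_{out}^0)\bigr)^{-1}g(u_{out}^0)$ from the IFT construction, establish strict positivity of the resolvent applied to $g(u_{out}^0)$ together with Hopf-type signs of the derivative at $\pm1$, and then propagate to small positive $\lambda$ by splitting $[-1,1]$ into an interior piece (uniform positive lower bound) and two boundary strips (controlled by the derivative signs at the endpoints). The paper reaches the same conclusion via the Taylor expansion $u^\lambda_{out}=u^{\overline\lambda}_{out}+(\lambda-\overline\lambda)\partial_\lambda u^{\overline\lambda}_{out}+R(\lambda,\overline\lambda)$ with a uniform $C^1$-bound on $R$, whereas you integrate $\partial_\lambda u^\mu_{out}$ in $\mu$; these are interchangeable.

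The one place where you take a genuinely different tool is the positivity statement for $\bigl(-DF(u_{out}^0)\bigr)^{-1}$. You argue via strict positivity of the one-dimensional Sturm--Liouville Green's function under the hypothesis that the first Dirichlet eigenvalue of $L_0=-DF(u_{out}^0)$ is positive, whereas the paper invokes the functional-analytic strong maximum principle of Amann (Lemma \ref{BqMax}, citing \cite{Amann05}), precisely because the zero-order coefficient of $-DF(u_{out}^0)$ has the wrong sign for the classical weak/strong maximum principle. For a regular one-dimensional problem your route is indeed valid and arguably more elementary: with $\mu_0>0$, the solutions $u_\pm$ of $L_0u=0$ vanishing at $\pm1$ cannot vanish again in $[-1,1]$ (domain monotonicity of the first eigenvalue), so $G_0(z,y)=W^{-1}u_-(\min)u_+(\max)>0$ and one reads off the Hopf-type endpoint derivatives from the nonvanishing of $u_\pm'$ at $\mp1$. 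The abstract route in the paper has the advantage of extending to higher dimensions and weaker regularity, but for this problem the two are equivalent.

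Two small remarks. First, you invoke Theorem \ref{stability} to get positivity of the first eigenvalue of $L_0$; it is cleaner (and avoids any appearance of circularity) to cite Corollary \ref{Stabcatcor} directly, since $\mu_0(c_{out})<0$ is the precise fact proved there and Theorem \ref{stability} is its consequence. Second, the Hopf-lemma step that you flag as the main technical hurdle is unproblematic: on $\Omega(u_{out})$ one has $r\geq u_{out}(0)+1=1/\cosh(c_{out})>0$, so $\frac1r\partial_r(r\partial_r\cdot)+\sigma^2\partial_z^2$ is uniformly elliptic with smooth coefficients, the catenoid boundary $r=u_{out}(z)+1$ is $C^\infty$ and satisfies an interior ball condition at every $(z,u_{out}(z)+1)$ with $z\in(-1,1)$, and the strong maximum principle gives $\psi_{u_{out}}>0$ in the interior; Hopf's lemma then yields $\partial_r\psi_{u_{out}}(z,u_{out}(z)+1)>0$, hence $g(u_{out}^0)>0$ on $(-1,1)$ (note that the paper only uses $g(u_{out}^0)\geq0$, $g(u_{out}^0)\not\equiv0$, so your stronger pointwise positivity is more than is needed).
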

Theorem \ref{DeflectionOut} reflects a physically expected behaviour: A larger electrostatic force pulls stable configurations of the film outwards. The proof relies on a functional analytic version of the maximum principle \cite{Amann05} and is presented in Section \ref{SectionStationary3}. For earlier investigations of the direction of deflection, we refer to \cite{EL17, LW14a,Nik21b}. \\

Concerning the small aspect ratio model \eqref{A.56}, the previous results, Theorem \ref{exstatsL}-Theorem \ref{DeflectionOut}, remain true. Additionally, we present a rigorous investigation of the direction of deflection for the inner catenoid $u_{in}$. A simplified version reads:
\begin{thm}{\bf (Direction of Deflection in \eqref{A.56})}\label{Auslenkung16simp} \\
Let $\sigma>\sigma_{crit}$ be fixed. Then, there are $\sigma_*,\sigma^*$ with\\
{\bf(i)} If $\sigma < \sigma_*$, then there exists $\delta > 0$ such that
\begin{align*}
u_{in}^{\overline{\lambda}} (z) > u_{in}^{\lambda}(z) \,, \qquad 0 \leq \overline{\lambda} < \lambda < \delta \,, \quad z \in (-1,1)\,.
\end{align*} 
{\bf(ii)} If $\sigma > \sigma^*$, then there exist $\delta > 0$ such that $u_{in}^{\overline{\lambda}}$ and $u_{in}^{\lambda}$ intersect exactly two times for $0 \leq \overline{\lambda} < \lambda < \delta$.
\end{thm}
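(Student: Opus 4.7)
The plan is to reduce both claims to a pointwise sign analysis of the first-order perturbation $v(z):=\partial_\lambda u^\lambda_{in}(z)|_{\lambda=0}$. Since the analogue of Theorem \ref{exstatsL} for \eqref{A.56} provides an analytic branch $[\lambda\mapsto u^\lambda_{in}]\colon[0,\delta)\to W^2_{q,D}(-1,1)\hookrightarrow C^1([-1,1])$, one has the uniform expansion $u^\lambda_{in}=u_{in}+\lambda v+O(\lambda^2)$, and
\begin{equation*}
u^\lambda_{in}(z)-u^{\overline\lambda}_{in}(z) = \int_{\overline\lambda}^\lambda \partial_\mu u^\mu_{in}(z)\,d\mu
\end{equation*}
shows that, after shrinking $\delta$, the sign pattern of $v$ on $(-1,1)$ transfers to the difference. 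Uniform negativity $v<0$ then gives (i), while $v$ having exactly two simple interior zeros $\pm z_0$ gives (ii), the two zero locations moving analytically in $\lambda$ by the implicit function theorem.

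Differentiating \eqref{A.5} at $\lambda=0$ and using $\sigma c_{in}=\cosh c_{in}$, which yields $1+\sigma^2(\partial_z u_{in})^2=\cosh^2(c_{in}z)$ and $(u_{in}+1)^2=\cosh^2(c_{in}z)/\cosh^2 c_{in}$, the rescaling $y=c_{in}z$, $w(y):=v(y/c_{in})$ turns the linear ODE for $v$ into
\begin{equation*}
\widetilde{\mathcal L}\,w:=w''-2\tanh(y)\,w'+w = -\frac{\cosh y}{\ln^2\!\left(2\cosh c_{in}/\cosh y\right)}=:h(y),\qquad w(\pm c_{in})=0.
\end{equation*}
The homogeneous equation admits the explicit fundamental system $w_1(y):=\sinh y$ (odd) and $w_2(y):=y\sinh y-\cosh y$ (even), with Wronskian $\cosh^2 y$; by \eqref{Defccrit}, $w_2(c_{in})=0$ precisely when $c_{in}=c_{crit}$, so $w_2$ carries the Jacobi-field degeneracy of the fold at $\sigma_{crit}$. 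Letting $w_p$ be the particular solution produced by variation of parameters with $w_p(0)=w_p'(0)=0$, evenness of $h$ forces the unique $W^2_{q,D}$-solution of the boundary value problem to be
\begin{equation*}
w(y) = w_p(y) - \frac{w_p(c_{in})}{w_2(c_{in})}\,w_2(y),\qquad\text{in particular}\quad w(0)=\frac{w_p(c_{in})}{w_2(c_{in})}.
\end{equation*}

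For (i), I would let $c_{in}\downarrow c_{crit}$. Then $w_2(c_{in})\downarrow 0^+$, while a short computation gives $w_p(c_{crit})=-\sinh(c_{crit})\int_0^{c_{crit}} w_2(s)h(s)/\cosh^2(s)\,ds<0$, since $w_2<0$ and $h<0$ on $(0,c_{crit})$. Hence $w(0)\to-\infty$, and because $w_2<0$ on $(-c_{crit},c_{crit})$ the correction term $-[w_p(c_{in})/w_2(c_{in})]\,w_2(y)$ drives $w$ uniformly to $-\infty$ on compact subsets. A matching boundary-layer analysis at $y=\pm c_{in}$, where both $w_p$ and $w_2$ vanish linearly in $c_{in}-c_{crit}$, confirms that the sign $w<0$ persists to the endpoints. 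Continuity in $c_{in}$ then defines a maximal right-neighbourhood $(c_{crit},c_*)$ on which $w<0$ throughout $(-c_{in},c_{in})$, and setting $\sigma_*:=\cosh(c_*)/c_*$ yields (i).

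For (ii), I would analyse the opposite regime $c_{in}\to\infty$. Here the variation-of-parameters integrals
\begin{equation*}
I_1(y):=\int_0^y \frac{w_2(s)h(s)}{\cosh^2(s)}\,ds,\qquad I_2(y):=\int_0^y \frac{\sinh(s)h(s)}{\cosh^2(s)}\,ds
\end{equation*}
in $w_p=-\sinh(y)I_1+w_2(y)I_2$ receive competing contributions from the inner region $|s|<c_{crit}$ (where $w_2h>0$) and the outer region $c_{crit}<|s|<c_{in}$ (where $w_2h<0$). Asymptotic estimates of $I_1$ and $I_2$ should show that for $c_{in}$ sufficiently large, $w(0)=w_p(c_{in})/w_2(c_{in})$ and the boundary slope $w'(c_{in})$ have signs forcing at least one sign change of $w$ on $(0,c_{in})$, hence by evenness at least two on $(-c_{in},c_{in})$. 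Ruling out further zeros is the main obstacle: because the linearisation at $u_{in}$ has a positive eigenvalue (instability, cf.\ the analogue of Theorem \ref{stability}(i)), the standard maximum principle is unavailable, and one must instead invoke a Sturm--Picone comparison of $w$ with $w_2$, exploiting $\widetilde{\mathcal L}w=h<0$ together with the fact that $w_2$ has exactly one zero in $(0,\infty)$. The threshold $\sigma^*=\cosh(c^*)/c^*$ is then defined as the infimum of $\sigma$ for which this comparison yields a unique interior zero of $w$ on $(0,c_{in})$, and the persistence of exactly two transverse intersections of $u^\lambda_{in}$ and $u^{\overline\lambda}_{in}$ for small $\lambda$ follows from the first paragraph.
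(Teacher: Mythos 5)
Your skeleton is sound: you reduce the theorem to the sign structure of $v=\partial_\lambda u^0_{in}$, you identify the correct fundamental system $w_1=\sinh y$, $w_2=y\sinh y-\cosh y$ (where $w_2$ is $-\varphi$ from \eqref{DefPhi} after rescaling), and you know that the Taylor/analyticity argument transfers the sign pattern of $v$ to $u^\lambda_{in}-u^{\overline\lambda}_{in}$. But the crucial pointwise sign analysis of $v$ is where your proposal and the paper diverge, and where your version has genuine gaps.

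The paper does not try to track the solution pointwise through variation of parameters. Instead it applies an \emph{anti-maximum principle} (\cite[Theorem 2.3]{Shi04}, recalled as Lemma~\ref{Auslenkung13}). That result exploits precisely the spectral picture of $DF(u^0_{in})$ (one positive, simple eigenvalue, all others negative — Corollary~\ref{Stabcatcor}) and converts the sign of the single number $\int_{-1}^1 g_{sar}\,\varphi\,\mathrm{d}z$ into a complete pointwise description of $v$: either $v<0$ throughout with the boundary slopes controlled, or $v$ has exactly one zero $\pm r_0$ per half-interval with the slopes at $\pm 1, \pm r_0$ controlled. The whole problem then collapses to a sign analysis of the scalar integral $I_1(\sigma)=\int_0^c\frac{1-z\tanh z}{\ln^2(2\cosh c/\cosh z)}\mathrm{d}z$, which is positive at $c=c_{crit}$ (giving $\sigma_*$ by continuity) and negative as $c\to\infty$ (giving $\sigma^*$ by a comparison of the two subintegrals). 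This is both simpler and, crucially, rigorous, because Shi's theorem supplies the step your proposal hand-waves.

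Concretely, the gaps in your proposal are: (a) in part (i), after showing $w(0)\to-\infty$ as $c_{in}\downarrow c_{crit}$, you still have to show $w<0$ on the \emph{entire} interval; the ``matching boundary-layer analysis at $y=\pm c_{in}$'' is asserted, not carried out, and the claim that $w_p(c_{in})$ ``vanishes linearly in $c_{in}-c_{crit}$'' is false — $w_p(c_{in})$ converges to a strictly negative limit while only $w_2(c_{in})$ vanishes; (b) in part (ii), the ``asymptotic estimates of $I_1$ and $I_2$ should show\dots'' is a placeholder, and the subsequent step — ruling out more than two zeros — is precisely the hard part, which you propose to settle by ``Sturm--Picone comparison of $w$ with $w_2$.'' That does not work off the shelf: Sturm comparison separates zeros of solutions of homogeneous equations, whereas $w$ solves the forced equation $\widetilde{\mathcal{L}}w=h$; controlling the zero count of $v=(-DF(u^0_{in}))^{-1}f$ when the operator has one positive eigenvalue is exactly the content of Shi's theorem and its proof, which is not a one-line Sturm argument. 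Without that ingredient, (ii) is unproved and the threshold $\sigma^*$ you define is not well-posed.

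In short: the high-level reduction matches the paper, but you are effectively trying to re-derive the anti-maximum principle ad hoc, and both the boundary-layer step and the exact-zero-count step are missing. The efficient route is to establish (or cite) Lemma~\ref{Auslenkung13} and then analyze the sign of the integral $\int g_{sar}\varphi$, as the paper does in Lemma~\ref{Auslenkung14}.
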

Part (i) means that the unstable stationary solutions $u^\lambda_{in}$ deflects inwards instead of outwards which confirms formal results from \cite{MP09}.  The precise statement and its proof is content of Section \ref{SectionStationary4}. It is based on an anti-maximums-principle \cite{Shi04}, see Appendix \ref{AMP}.

\section{Notations and Preliminaries}\label{NP}

Let $q \in (1,\infty)$ and $s\in (0,2]$ with $s \neq 1/q$. Put
\begin{align*}
 W^s_{q,D}(-1,1):= \begin{cases} \ \ \ W^s_q(-1,1) \quad \qquad \qquad \qquad \qquad \ \ \,\text{for} \quad s\in (0,1/q)\,, \\
  \ \ \big \lbrace f \in W^s_q(-1,1) \, \big \vert \, f(\pm1)=0 \, \big \rbrace \quad\,\text{for} \quad s\in (1/q,2]\,,
\end{cases} 
\end{align*}
where $W^s_q(-1,1)$ is the fractional Sobolev space over $L_q(-1,1)$ of order $s$. We write $A \in \mathcal{H}  \big(W^2_{q,D}(-1,1), L_q(-1,1) \big )$
if $-A$ generates an analytic semigroup on $L_q(-1,1)$ with domain $W^2_{q,D}(-1,1)$, see \cite{AmannLQPP}. If $E_1$ and $E_2$ are Banach spaces, we denote by $\mathcal{L}(E_1,E_2)$ the Banach space of bounded linear operators from $E_1$ to $E_2$. Moreover, we write $E_1 \hookrightarrow E_2$ if $E_1$ is continuously embedded in $E_2$.\\

In the following, it is convenient to introduce
  \begin{align*}
S:=\big \lbrace w\in W^2_{q,D}(-1,1) \ \big \vert \, -1 < w < 1\big \rbrace 
 \end{align*}
 and
 \begin{align}
 F(w):= \sigma \partial_z \mathrm{arctan}(\sigma \partial_z w) - \frac{1}{w+1} \,, \qquad w \in S\,, \label{defF}
 \end{align}
 so that the stationary free boundary problem \eqref{eq11} becomes
 \begin{align}
  F(w)+\lambda g(w) =0 \,, \qquad w \in S\,.\label{statlambda1}
 \end{align}

\begin{section}{Existence: Proof of Theorem \ref{exstatsL}.}\label{SectionStationary1}

{\bf Proof of Theorem \ref{exstatsL}.}
We resolve \eqref{statlambda1} locally around $(w, \lambda)=(u_{in},0)$ and $(w,\lambda)=(u_{out},0)$. Because $F$ and $g$ (see \cite[Proposition 3.1]{LSS24b}) are analytic from $S$ to $L_q(-1,1)$,
this is possible if and only if $DF(u_{in})$ and $DF(u_{out})$ are isomorphisms from $W_{q,D}^2(-1,1)$ to $L_q(-1,1)$. 
A direct computation shows
\begin{align}
 DF(u_*) v = \frac{\sigma^2}{\mathrm{cosh}^2(c z)} \partial_{zz} v - \frac{2\sigma^2\,c}{\mathrm{cosh}^2(cz)} \mathrm{tanh}(c z) \partial_z v + \frac{\sigma^2\,c^2}{\mathrm{cosh}^2(cz)}v\,, \qquad v \in W^2_{q,D}(-1,1) \label{defB}
\end{align}
with $u_*$ defined in \eqref{catenoid}. Now, $DF(u*)$ is an isomorphism if and only if $DF(u_*)v=0$ has the unique solution $v=0$ in $W^2_{q,D}(-1,1)$. Multiplying $DF(u_*)v=0$ by $-\frac{\mathrm{cosh}^2(cz)}{\sigma^2}$ yields the equivalent condition that
 \begin{align}
 \begin{cases}
 -\partial_{zz} v +2 \, c \, \mathrm{tanh}(cz)\partial_z v-c^2 v = 0\,,\\
 v(\pm1)=0 
 \end{cases} \label{shooting1}
 \end{align}
 only possesses the trivial solution for $c$ equal to $c_{in}$ or $c_{out}$.
 This has already been shown in \cite[p.\,49]{Moulton08} with the aid of the shooting method, which is briefly recalled here: First, one fixes $C_1, C_2 \in \mathbb{R}$ and observes that the initial value problem
   \begin{align*}
 \begin{cases}
 -\partial_{zz} v +2 \, c \, \mathrm{tanh}(cz)\partial_z v-c^2 v = 0\,,\\
 v(0)=C_1 \,, \qquad \partial_z v(0)=C_2 \, c
 \end{cases} 
 \end{align*}  
 has the unique solution
 \begin{align}
 v(z)= C_2 \, \mathrm{sinh}(cz)- C_1 \, \big (c\,z\,\mathrm{sinh}(cz)-\mathrm{cosh}(cz) \big )\,.\label{shooting2}
\end{align}
Next, one adjust $C_1$ and $C_2$ such that $v$ satisfies the boundary conditions in \eqref{shooting1}, and thereby one finds that
\begin{subequations}
\begin{equation}
\text{\eqref{shooting1} has only the trivial solution for $c \neq c_{crit}$,} \label{firsteig0}
\end{equation}
 while
\begin{align}
 &v(z)= C_1 \,  \big (c_{crit}\,z\,\mathrm{sinh}(c_{crit}z)-\mathrm{cosh}(c_{crit}z) \big )\,, \quad C_1 \in \mathbb{R} \setminus \lbrace 0 \rbrace \notag \\
 &\text{is a non-trivial solution to \eqref{shooting1} for $c =c_{crit}$.}\label{firsteig}
\end{align}
\end{subequations}
 Since $c_{in} > c_{crit} > c_{out}$, we find that $DF(u_{in})$ as well as $DF(u_{out})$ are isomorphisms between $W^2_{q,D}(-1,1)$ and $L_q(-1,1)$. 
Hence, the implicit function theorem in the form \cite[Theorem 4.5.4]{BT03} yields some $\delta >0$ and analytic functions 
\begin{align*}
[\lambda \mapsto u^{\lambda}_{in}] &: [0,\delta) \rightarrow W^2_{q,D}(-1,1) \,, \ \qquad u_{in}^0=u_{in}\,,\\
[\lambda \mapsto u^{\lambda}_{out}] &: [0,\delta) \rightarrow W^2_{q,D}(-1,1) \,, \qquad u_{out}^0=u_{out}
\end{align*}
such that $u^\lambda_{in}$ and $u^\lambda_{out}$ are two different solutions to \eqref{eq11} for each $\lambda \in (0,\delta)$. 
The symmetry of $u^\lambda_{out}$ and $u^\lambda_{in}$ is shown similarly as in \cite[Theorem 1.1]{LSS24b}.
\qed

\begin{bem}\label{notcat}
In agreement with Theorem \ref{exstatsL} we denote from now on the inner catenoid $u_{in}$ by $u_{in}^0$, the outer catenoid $u_{out}$ by $u_{out}^0$, and the generic catenoid $u_*$ from \eqref{catenoid} by $u_*^0$.
\end{bem}

\end{section}

\begin{section}{Stability: Proof of Theorem \ref{stability}}\label{SectionStationary2}

\subsection{Stability Analysis of the Inner and Outer Catenoid}
First, we study stability of $u^0_{in}$ and $u^0_{out}$, i.e. the special case $\lambda =0$ in Theorem \ref{stability}. To this end, fix $\sigma > \sigma_{crit}$ and set $\lambda =0$. For a uniform computation, we linearize the dynamical version of \eqref{eq11}, see \eqref{dyn}, around 
\begin{align*}
 u_{*}^0(z)= \frac{\mathrm{cosh}(cz)}{\mathrm{cosh}(c)}-1 
\end{align*}
 with $c$ being either $c_{in}$ or $c_{out}$. For a solution $u \in W^2_{q,D}(-1,1)$ to the dynamical version of \eqref{eq11}, see \eqref{dyn}, with initial value $u_0$ close to $u_{*}^0$, we put $v:=u-u_{*}^0$ and write
\begin{align*}
 \partial_t v = \partial_t(u-u_{*}^0) 
  =F(u_{*}^0+v) - F(u_{*}^0)\,
 \end{align*}
with $F$ given by \eqref{statlambda1} and being smooth in a $W^2_{q,D}$-neighbourhood of $u_{*}^0$. We recall from \eqref{defB} that
\begin{align}
 DF(u_{*}^0) v &= \frac{\sigma^2}{\mathrm{cosh}^2(c z)} \partial_{zz} v - \frac{2\sigma^2\,c}{\mathrm{cosh}^2(cz)} \mathrm{tanh}(c z) \partial_z v + \frac{\sigma^2\,c^2}{\mathrm{cosh}^2(cz)}v \nonumber\\
 &= \sigma^2\, \bigg [ \partial_z \Big ( \frac{1}{\mathrm{cosh}^2(cz)} \,\partial_z v \Big ) + \frac{c^2}{\mathrm{cosh}^2(cz)} v \bigg ]\,. \label{defB2}
\end{align}
Thus, the linearization of \eqref{eq11} around the generic catenoid $u_{*}^0$ is given by
\begin{align*}
 \partial_t v - DF(u_{*}^0) v = F(v+u_{*}^0) - F(u_{*}^0) -DF(u_{*}^0) v =:G(v)
\end{align*}
with $DF(u_{*}^0)$ as above and $G \in  C^\infty\big ( \mathcal{O}, L_q(-1,1) \big )$ for a small neighbourhood $\mathcal{O}$ of $0$ in $W^2_{q,D}(-1,1)$ satisfying $G(0)= 0$ as well as $DG (0)=0$. 
Moreover, since $-DF(u_{*}^0)$ is a uniformly elliptic operator of second order with bounded smooth coefficients, $-DF(u_{*}^0)$ belongs to $\mathcal{H}\big (W^2_{q,D}(-1,1),L_q(-1,1)\big)$, see \cite[Theorem 2.5.1\,(ii)]{LLMP04}. 
Letting $\mu_0(c)$ be the first eigenvalue of $DF(u_{*}^0)$ $-$ for details on the spectrum of $DF(u_{*}^0)$, we refer to Lemma \ref{SturmLiouville} below $-$ the stability criterion \cite[Theorem 9.1.2, Theorem 9.1.3]{Lunardi95} takes the form: 
\begin{itemize}
\item if $\mu_0(c) < 0$, then $u_{*}^0$ is exponentially asymptotically stable,
\item if $\mu_0(c) > 0$, then $u_{*}^0$ is unstable.
\end{itemize}
As only the sign of this first eigenvalue is crucial, we can substitute $0=\big (\mu - DF(u_{*}^0)\big )v$ by
\begin{align}
 \begin{cases}
  &0=\displaystyle\mu v - \frac{c^2}{\mathrm{cosh}^2(cz)} v - \partial_z \Big ( \frac{1}{\mathrm{cosh}^2(cz)} \partial_z v  \Big )\,,\\
  &v(\pm 1)=0\,.
  \end{cases} \label{stabcat}
  \end{align}
  
Since, this is a regular Sturm-Liouville problem, the following is known:

\begin{lem}\label{SturmLiouville}
For fixed $c \in (0,\infty)$, the spectrum of \eqref{stabcat} consists only of countably infinitely many, algebraically simple eigenvalues 
 $$\mu_0(c) > \mu_1(c) >  \dots > \mu_n(c)\rightarrow -\infty\,.$$
 The normalized eigenfunction $v_n^c$ corresponding to $\mu_n(c)$ has exactly $n$ zeroes in $(-1,1)$ and satisfies 
 $$v_n^c(-z)=(-1)^n\,v_n^c(z)\,, \qquad z \in(-1,1)\,.$$
\end{lem}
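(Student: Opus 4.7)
The plan is to recast \eqref{stabcat} as a standard regular Sturm--Liouville problem on $[-1,1]$ and invoke the classical spectral theorem; the parity identity $v_n^c(-z)=(-1)^n v_n^c(z)$ will then follow from the evenness of the coefficients combined with simplicity of the eigenvalues.

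First, setting $\lambda := -\mu$, the boundary value problem \eqref{stabcat} takes the symmetric form
\[
-\bigl(p\, v'\bigr)' + q\, v \;=\; \lambda v \quad \text{on } (-1,1), \qquad v(\pm 1)=0,
\]
with $p(z):=1/\cosh^2(cz)$ and $q(z):=-c^2/\cosh^2(cz)$. Both $p$ and $q$ are smooth on $[-1,1]$, $p$ is strictly positive, and the boundary conditions are separated Dirichlet, so the problem is regular. The classical theory of regular Sturm--Liouville problems then yields a countable sequence of real, algebraically simple eigenvalues $\lambda_0<\lambda_1<\cdots\to +\infty$ together with the Sturmian oscillation property that the normalized eigenfunction corresponding to $\lambda_n$ has exactly $n$ zeros in $(-1,1)$. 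Setting $\mu_n(c):=-\lambda_n$ delivers the ordering $\mu_0(c)>\mu_1(c)>\cdots\to-\infty$ and the zero-counting statement.

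For the parity, I exploit that the coefficients $p,q$ depend on $z$ only through the even function $\cosh^2(cz)$. Hence whenever $v_n^c$ is an eigenfunction at $\mu_n(c)$, so is $\widehat v(z):=v_n^c(-z)$, and simplicity forces $\widehat v = \alpha\, v_n^c$ with $\alpha \in \{-1,+1\}$; in particular $v_n^c$ is either even or odd. To pin down the parity, I note that at any zero $z_0\in(-1,1)$ of $v_n^c$ one must have $(v_n^c)'(z_0)\neq 0$, since otherwise the uniqueness theorem for the underlying linear second-order ODE would force $v_n^c\equiv 0$. An even eigenfunction can therefore not vanish at $0$, and its zeros come in symmetric pairs $\pm z_i\in(0,1)$, so their total number is even; an odd eigenfunction has $v_n^c(0)=0$ plus symmetric pairs, giving an odd total. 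Matching this to the oscillation count $n$ forces $v_n^c$ to be even for even $n$ and odd for odd $n$, which is precisely $v_n^c(-z)=(-1)^n v_n^c(z)$. Since all ingredients are classical, no serious obstacle is anticipated; the only mild subtlety is linking the sign $\alpha=\pm 1$ to the parity of $n$ via the simple-zero observation just noted.
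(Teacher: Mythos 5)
Your proposal is correct and follows essentially the same route as the paper, which simply cites classical regular Sturm--Liouville theory (the paper's proof is a one-liner pointing to Walter's ODE text). The reformulation with $p(z)=1/\cosh^2(cz)$, $q(z)=-c^2/\cosh^2(cz)$ and $\lambda=-\mu$ is the right one, and your parity argument (evenness of coefficients + geometric simplicity gives $v_n^c(-z)=\pm v_n^c(z)$; simple zeros + the oscillation count then fix the sign via the even/odd count of zeros) is a clean, standard completion of the part that the paper only attributes to the reference.

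One point worth sharpening: you state that the classical theory delivers \emph{algebraically} simple eigenvalues, but the standard Sturm--Liouville theorems give \emph{geometric} simplicity. Since the lemma is stated in the $L_q$-framework with $q>2$ (the domain is $W^2_{q,D}$, not $W^2_{2,D}$), self-adjointness is not immediate, and the paper explicitly flags that semi-simplicity in the sense of \cite[Definition A.2.3]{Lunardi95} needs ``a direct computation.'' The fix is short: if $v$ is an eigenfunction and $w$ solved $(DF(u_*^0)-\mu)w=v$ in $W^2_{q,D}$, then $v,w$ are automatically smooth by elliptic regularity, and integrating the Lagrange identity over $(-1,1)$ (all boundary terms vanish by the Dirichlet condition) yields $\int_{-1}^1 v^2\,\mathrm{d}z=0$, a contradiction; thus no generalized eigenvector exists. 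You should either include this remark or cite it, rather than folding algebraic simplicity silently into ``the classical theory.''
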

\begin{proof} This follows from \cite[p.\,286]{Walter00}, except for the fact that each eigenvalue is semi-simple in the sense \cite[Definition A.2.3]{Lunardi95} that follows from a direct computation.
\end{proof}

The function $[c \mapsto \mu_0(c)]$ is called {\it first eigencurve} for \eqref{stabcat}. In \cite{BV96}, qualitative properties of eigencurves for Sturm-Liouville problems depending linearly on a parameter $c$ are stated. 
Though \eqref{stabcat} depends non-linearly on $c$, it is still possible to adapt \cite[Section 2.1]{BV96}:

\begin{prop}\label{AnEigKur}
 The first eigencurve
 $$\mu_0: (0,\infty) \rightarrow \mathbb{R} \,, \qquad c \mapsto \mu_0(c)$$
of \eqref{stabcat} is smooth and has exactly one zero. It is attained at $c_{crit}$ with $\mu_0^\prime(c_{crit})>0$.
\end{prop}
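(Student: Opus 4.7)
Set $P(c,z):=\cosh^{-2}(cz)$ and $Q(c,z):=c^2\cosh^{-2}(cz)$, so that \eqref{stabcat} becomes the eigenvalue problem $\mathcal{A}(c)v=\mu v$ for the formally self-adjoint operator $\mathcal{A}(c)v:=\partial_z(P(c,z)\,\partial_z v)+Q(c,z)\,v$ on $L_2(-1,1)$ with Dirichlet domain $W^2_{q,D}(-1,1)$. Lemma \ref{SturmLiouville} says that $\mu_0(c)$ is algebraically simple with an even, nodeless eigenfunction $v_0^c$, which I normalise by $\Vert v_0^c\Vert_{L_2}=1$. Since $\mathcal{A}$ depends analytically on $c$, the analytic implicit function theorem applied to $\Phi(c,\mu,v):=(\mathcal{A}(c)v-\mu v,\,\Vert v\Vert_{L_2}^2-1)$ at any $(c_0,\mu_0(c_0),v_0^{c_0})$ produces a smooth local branch; the $(\mu,v)$-linearisation is invertible thanks to simplicity of $\mu_0(c_0)$ and the Fredholm alternative. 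Hence $\mu_0\in C^\infty(0,\infty)$.

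\textbf{Location and uniqueness of the zero.} The function $w(z):=c_{crit}z\sinh(c_{crit}z)-\cosh(c_{crit}z)$ from \eqref{firsteig} solves $\mathcal{A}(c_{crit})w=0$ with $w(\pm1)=0$. Since $w'(z)=c_{crit}^2z\cosh(c_{crit}z)$ vanishes only at $z=0$, $w(0)=-1<0$, and $w$ is even, $w$ stays strictly negative on $(-1,1)$; Lemma \ref{SturmLiouville} thus forces $w$ to be (a multiple of) the nodeless first eigenfunction, giving $\mu_0(c_{crit})=0$. For $c\ne c_{crit}$, \eqref{firsteig0} rules out $0$ from the spectrum of $\mathcal{A}(c)$, so $\mu_0(c)\ne0$; continuity of $\mu_0$ then implies a constant sign on each of $(0,c_{crit})$ and $(c_{crit},\infty)$.

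\textbf{Transversality at $c_{crit}$.} Differentiating $\mathcal{A}(c)v_0^c=\mu_0(c)v_0^c$ with respect to $c$, testing against $v_0^c$, and using self-adjointness of $\mathcal{A}(c)$ together with the normalisation, yields the Feynman--Hellmann identity
\begin{equation*}
\mu_0'(c)=\int_{-1}^1\partial_c Q(c,z)\,(v_0^c(z))^2\,dz-\int_{-1}^1\partial_c P(c,z)\,(\partial_z v_0^c(z))^2\,dz.
\end{equation*}
Direct computation gives $\partial_c P(c_{crit},z)=-2z\sinh(c_{crit}z)/\cosh^3(c_{crit}z)<0$ for $z\ne0$ and
\begin{equation*}
\partial_c Q(c_{crit},z)=\frac{2c_{crit}}{\cosh^3(c_{crit}z)}\bigl(\cosh(c_{crit}z)-c_{crit}z\sinh(c_{crit}z)\bigr).
\end{equation*}
Because $x\mapsto x\tanh x$ is strictly increasing on $[0,\infty)$ and equals $1$ precisely at $x=c_{crit}$ by \eqref{Defccrit}, the bracket above is strictly positive for $|z|<1$ and vanishes only at $z=\pm1$. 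Both integrands therefore have a favourable sign, so $\mu_0'(c_{crit})>0$; combined with the previous paragraph this forces $\mu_0<0$ on $(0,c_{crit})$ and $\mu_0>0$ on $(c_{crit},\infty)$, concluding the proof.

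\textbf{Main obstacle.} The delicate ingredient is the sign analysis of $\partial_c Q$. For $c>c_{crit}$ the quantity $\partial_c Q(c,\cdot)$ changes sign inside $(-1,1)$, so global monotonicity of the eigencurve via Feynman--Hellmann alone is out of reach; this is exactly why the proof must rely on the soft exclusion of further zeros through \eqref{firsteig0}. At the distinguished point $c_{crit}$ itself, the defining equation \eqref{Defccrit} is precisely what forces $\partial_c Q(c_{crit},\cdot)\ge 0$ throughout $[-1,1]$, and this structural coincidence is the heart of the argument.
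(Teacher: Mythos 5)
Your proof is correct, and it takes a genuinely different route from the paper's while arriving at the same decisive integrals. For smoothness, the paper introduces the scalar shooting function $D(c,\mu):=v(1;c,\mu)$, proves $\partial_\mu D(c,\mu_0(c))>0$ via a Lagrange/Wronskian identity, and then invokes the implicit function theorem on $D$; you instead apply the implicit function theorem directly to the eigenvalue system $\Phi(c,\mu,v)=(\mathcal{A}(c)v-\mu v,\|v\|_{L_2}^2-1)$, using simplicity and the Fredholm alternative. For $\mu_0'(c_{crit})$, the paper writes $\mu_0'(c_{crit})=-\partial_c D/\partial_\mu D$ and evaluates both partials by further Wronskian-type computations, whereas you read off $\mu_0'$ from the Feynman--Hellmann identity; a short check shows your two integrands $\partial_c Q\,v^2$ and $-\partial_c P\,(\partial_z v)^2$ coincide exactly with the two integrals the paper produces from $-\partial_c D$, so the sign analysis (in particular the use of \eqref{Defccrit} to force $1-c_{crit}z\tanh(c_{crit}z)\geq 0$) is identical in substance. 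Your framing is more conceptual and shorter, avoiding the intermediate $D$-bookkeeping; the paper's is more elementary and ODE-based, which also explains why it is phrased as a variant of \cite{BV96}. One small gap worth a sentence in your smoothness step: the implicit function theorem produces some smooth eigenvalue branch through $(c_0,\mu_0(c_0))$, and you should note (as the paper does) that the corresponding eigenfunctions remain nodeless for nearby $c$, so that by Lemma \ref{SturmLiouville} the branch is indeed $\mu_0(\cdot)$ and not a different eigenvalue; without this the branch is a priori just \emph{an} eigencurve, not the first one.
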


\begin{proof}
{\bf(i)} {\it Smoothness:} 
Let $v(\, \cdot \,; c,\mu)$ be the unique non-trivial solution to 
 \begin{align}
 0=\mu v - \frac{c^2}{\mathrm{cosh}^2(cz)} v - \partial_z \Big ( \frac{1}{\mathrm{cosh}^2(cz)} \partial_z v \Big ) \label{stabcatAWP}
  \end{align}
supplemented with initial conditions 
\begin{align}
v(-1)=0\,, \qquad \partial_z v(-1)=1\,, \label{intcon}
\end{align} 
and define
\begin{align}
 D(c,\mu):=v(1;c,\mu)\,. \label{defD}
\end{align}
As $v(\, \cdot \,; c,\mu)$ depends smoothly on the parameters $(c,\mu)$, see for example \cite[Theorem~9.5, Remark~9.6\,(b)]{Amann95}, we have $D \in C^\infty\big( (0,\infty)\times \mathbb{R},\mathbb{R}\big )$. Moreover, we note that $\mu$ and  $v(\, \cdot \,; c,\mu)$ are a pair of eigenvalue and eigenfunction to \eqref{stabcat} if and only if $D(c, \mu) =0$. We claim that it is further possible to characterize the first eigenvalue $\mu_0(c)$ via $D$ and  $v(\, \cdot \,; c,\mu)$:
\begin{align}
 D(c,\mu) =0 \ \text{and} \ v( z; c,\mu) \neq 0 \ \text{for} \ z \in (-1,1) \qquad \Longleftrightarrow \qquad \mu=\mu_0(c)\,. \label{D0}
\end{align}
Indeed, if $D(c,\mu)=0$ and $v( z; c,\mu) \neq 0 \ \text{for} \ z \in (-1,1)$, then $v(\, \cdot\,; c,\mu)$ is an eigenfunction of \eqref{stabcat} corresponding to the eigenvalue $\mu$ and having no zero in $(-1,1)$. It then follows from Lemma \ref{SturmLiouville} that $\mu=\mu_0(c)$. 
Otherwise, if $\mu$ coincides with the first eigenvalue $\mu_0(c)$ of \eqref{stabcat}, then the unique solvability of initial value problems yields a constant $C \in \mathbb{R} \setminus \lbrace 0 \rbrace$ with
\begin{align*}
 v\big (\, \cdot \,, c , \mu_0(c) \big ) = C  v_0^c\,,
\end{align*}
where $v_0^c$ denotes the first eigenfunction from Lemma \ref{SturmLiouville}. Thus, by Lemma \ref{SturmLiouville} the function $ v\big (\, \cdot \,, c , \mu_0(c) \big )$ satisfies Dirichlet boundary conditions and has no zero in $(-1,1)$. This proves \eqref{D0}.\\

For fixed $c >0$, we wish to resolve $D (c,\mu ) =0$ for $\mu$ locally around $(c,\mu)=\big (c, \mu_0(c) \big )$.
Recalling that $v=v(\,\cdot\,; c,\mu)$  depends smoothly on $\mu$ and $c$, we compute that the derivative of \eqref{stabcatAWP} with respect to $\mu$ is given by
\begin{align}
0=v+\mu \partial_\mu  v- \frac{c^2}{\mathrm{cosh}^2(cz)} \partial_\mu v - \partial_z \left ( \frac{1}{\mathrm{cosh}^2(cz)} \partial_{z}\partial_{\mu} v\right )\,.\label{231023}
\end{align} 
Multiplying \eqref{stabcatAWP} by $\partial_\mu v$ and subtracting the product of \eqref{231023} and $v$, we find
\begin{align*}
0
&=-v^2 - \partial_z \Big ( \frac{1}{\mathrm{cosh}^2(cz)} \partial_z v \Big ) \partial_\mu v + \partial_z \Big ( \frac{1}{\mathrm{cosh}^2(cz)} \partial_{z}\partial_{\mu} v\Big ) v \,.
\end{align*}
Integrating the previous identity over $(-1,1)$ yields
\begin{align}
0 < \int_{-1}^1 v^2 \, \mathrm{d} z &= \int_{-1}^1 \bigg (\partial_z \Big ( \frac{1}{\mathrm{cosh}^2(cz)} \partial_{z}\partial_{\mu}v \Big ) v - \partial_z \Big ( \frac{1}{\mathrm{cosh}^2(cz)} \partial_z v \Big ) \partial_\mu v\bigg) \mathrm{d} z  \nonumber \\
&= \bigg [ \frac{1}{\mathrm{cosh}^2(cz)}  ( \partial_{z}\partial_{\mu} v - (\partial_z v)(\partial_\mu v) )\bigg ]^{z=1}_{z=-1} \,. \label{231023b}
\end{align}
We want to evaluate \eqref{231023b} at $(c, \mu)= \big (c,\mu_0(c) \big )$. For $\mu=\mu_0(c)$, it follows from \eqref{D0} that $v\big ( \, \cdot \,;c,\mu_0(c) \big )$ is a first eigenfunction, and Lemma \ref{SturmLiouville} yields that $v( \, \cdot \,;c,\mu_0(c) \big )$ is even with $v\big (\pm 1; c, \mu_0(c) \big )=0$. By symmetry and the initial conditions \eqref{intcon}, we get $\partial_z v (1;c,\mu_0(c)  )= -\partial_z v (-1; c, \mu_0(c) )=-1$. Moreover, applying the initial condition $v(-1;c,\mu)=0$ for all $(c,\mu)$, we find $\partial_\mu v \big (-1;c , \mu_0(c) \big )=0$. Consequently, \eqref{231023b} can be reduced to 
\begin{align*}
0 < \int_{-1}^1 v^2 \, \mathrm{d} z = \frac{\partial_\mu v \big (1; c,\mu_0(c) \big )}{\mathrm{cosh}^2(c)} \,.
\end{align*}
Recalling that $D (c,\mu )=v\big (1;c,\mu )$ by \eqref{defD}, we deduce further that
\begin{align}
\partial_\mu D \big (c,\mu_0(c) \big ) = \partial_\mu v \big (1;c,\mu_0(c) \big ) = \mathrm{cosh}^2(c) \int_{-1}^1 v^2 > 0\,. \label{ablmu}  
\end{align}
Hence, for fixed $c >0$, the implicit function theorem yields some $\rho > 0$ and a function $\tilde{\mu} \in C^\infty \big ((c-\rho, c+\rho), \mathbb{R} \big)$ with $\tilde{\mu}(c)=\mu_0(c)$ and 
\begin{align}
 D\big (\tilde{c},\tilde{\mu}(\tilde{c}) \big )=D \big (c,\mu_0(c) \big )= 0 \,, \qquad \tilde{c} \in (c-\rho,c+\rho)\,. 
\end{align}

In addition, by the smooth dependence of $v\big (\, \cdot \,, \tilde{c}, \tilde{\mu}(\tilde{c}) \big )$ on $\tilde{c}$, we may assume that $v\big (\, \cdot \,, \tilde{c}, \tilde{\mu}(\tilde{c}) \big )$ 
has no zero in $(-1,1)$ as the same holds true for $v\big (\, \cdot \,, c, \mu_0(c) \big )$. Thus, \eqref{D0} implies 
\begin{align*}
 \mu_0(\tilde{c})=\tilde{\mu}(\tilde{c}) \,, \qquad \tilde{c} \in (c-\rho,c+\rho)\,,
\end{align*}
and the smoothness of $[c \mapsto \mu_0(c)]$ follows from that.\\

{\bf(ii)} {\it Zeroes:} Rewriting \eqref{stabcat} for $\mu =0$ in non-divergence form, we see that it is equivalent to \eqref{shooting1}.
Hence, it follows from \eqref{firsteig0} and \eqref{firsteig} that $0$ is an eigenvalue of \eqref{stabcat} if and only if $c=c_{crit}$. In this case, the corresponding eigenfunction is a multiple of 
\begin{align*}
w(z):=c_{crit} \,z\, \mathrm{sinh}(c_{crit} z) - \mathrm{cosh}(c_{crit}  z) \,.
\end{align*}
Since $w$ has no zeroes in $(-1,1)$, we deduce from Lemma \ref{SturmLiouville} that $0$ is the first eigenvalue of \eqref{stabcat} for $c=c_{crit}$ so that $c_{crit}$ is indeed the only zero of $\mu_0$. \\

{\bf(iii)} {\it Derivative at $c_{crit}$:} Since
\begin{align}
\mu_0^\prime(c_{crit}) = - \frac{\partial_c D(c_{crit},0)}{\partial_\mu D(c_{crit} ,0)} \label{impabl}
\end{align}
and $\partial_{\mu} D(c_{crit},0) > 0$ thanks to \eqref{ablmu}, we have to check that $\partial_c D(c_{crit},0)<0$. Differentiating \eqref{stabcatAWP} with respect to $c$ yields
\begin{align}
0
&=\mu \partial_c v +\frac{2 c^2 \mathrm{sinh}(cz)z}{\mathrm{cosh}^3(cz)} v- \frac{2 c}{\mathrm{cosh}^2(cz)} v  - \frac{c^2}{\mathrm{cosh}^2(cz)} \partial_c v \nonumber \\
&\hphantom{=}\,+ \partial_z \Big ( \frac{2\, \mathrm{sinh}(cz)z}{\mathrm{cosh}^3(cz)} \partial_z v \Big ) - \partial_z \Big ( \frac{1}{\mathrm{cosh}^2(cz)} \partial_{z}\partial_{c}v \Big ) \,. \label{ablc}
\end{align}
Multiplying \eqref{ablc} by $v=v(\,\cdot\,;c,\mu)$ and subtracting the product of \eqref{stabcatAWP} and $\partial_c v$ yields
\begin{align}
0
&= \partial_z \Big ( \frac{1}{\mathrm{cosh}^2(cz)} \partial_z v \Big ) \partial_c v\nonumber \\
&\hphantom{=}\,+ \frac{2c^2 \mathrm{sinh}(cz) z}{\mathrm{cosh}^3(cz)} v^2 - \frac{2c}{\mathrm{cosh}^2(cz)} v^2
+ \partial_z \Big ( \frac{2\,\mathrm{sinh}(cz)z}{\mathrm{cosh}^3(cz)} \partial_z v \Big) v - \partial_z \Big ( \frac{1}{\mathrm{cosh}^2(cz)} \partial_{z} \partial_{c} v \Big ) v \,. \label{291023}
\end{align}
Plugging $(c,\mu)=(c_{crit}, 0)$ into \eqref{291023} and then integrating from $-1$ to $1$ gives 
\begin{align}
 &\int_{-1}^1 \bigg (\partial_z \Big ( \frac{1}{\mathrm{cosh}^2(c_{crit}z)} \partial_z v\Big ) \partial_c v-\partial_z \Big ( \frac{1}{\mathrm{cosh}^2(c_{crit}z)} \partial_{z}\partial_{c} v\Big ) v \bigg )\mathrm{d} z \notag\\
 = &\int_{-1}^1  \frac{2c_{crit}}{\mathrm{cosh}^2(c_{crit}z)} \big (1-c_{crit} \,\mathrm{tanh}(c_{crit}z)z \big ) v^2  \mathrm{d} z + \int_{-1}^1 \frac{2\mathrm{sinh}(c_{crit}z)z}{\mathrm{cosh}^3(c_{crit}z)} (\partial_z v)^2\,\mathrm{d} z\,. \label{intdc}
\end{align}
For the second integral on the right-hand side, we have used integration by parts and the fact that the boundary terms vanish due to $v(\pm 1 ;c_{crit},0)=0$ by \eqref{D0} and $\mu_0(c_{crit})=0$.
From 
\begin{align*}
 1-c_{crit}\mathrm{tanh}(c_{crit}z)z &\geq 1 - c_{crit} \mathrm{tanh}(c_{crit}) \\
 &= \frac{\mathrm{cosh}(c_{crit})-c_{crit}\mathrm{sinh}(c_{crit})}{\mathrm{cosh}(c_{crit})}=0 \,, \qquad z\in (-1,1)\,,
\end{align*}
which is due to \eqref{Defccrit} combined with the positivity of the second integral on the right-hand side of \eqref{intdc}, we deduce that
\begin{align*}
 0&<\int_{-1}^1 \bigg ( \partial_z \Big ( \frac{1}{\mathrm{cosh}^2(c_{crit}z)} \partial_z v\Big ) \partial_c v-\partial_z \Big ( \frac{1}{\mathrm{cosh}^2(c_{crit}z)} \partial_{z}\partial_{c} v\Big ) v \bigg )\mathrm{d} z\\
 &\ = \Big [ \frac{1}{\mathrm{cosh}^2(c_{crit}z)} \big ((\partial_z v)(\partial_cv)-(\partial_z\partial_cv)v \big ) \Big ]_{z=-1}^{z=1}\\
 &\ =\frac{-\partial_c D(c_{crit},0)}{\mathrm{cosh}^2(c_{crit})}\,,
\end{align*}
where we have used $\partial_z v(1;c_{crit},0)= -\partial_z v(-1;c_{crit},0)$ by symmetry, the initial values \eqref{intcon} and the definition of $D$.
Finally, \eqref{ablmu} and \eqref{impabl} yield $\mu_0^\prime(c_{crit}) > 0$.
\end{proof}

\begin{cor}\label{Stabcatcor}
The inequalities $\mu_0(c_{out})< 0$ and $\mu_0(c_{in}) > 0 >\mu_1(c_{in})$ hold true. 
\end{cor}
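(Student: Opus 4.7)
The plan is to read off the first two inequalities directly from Proposition \ref{AnEigKur}, and to obtain the third by combining the strict ordering in Lemma \ref{SturmLiouville} with a shooting-type argument that rules out $\mu_1(c)=0$.

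First, for $\mu_0(c_{out})<0$ and $\mu_0(c_{in})>0$: Proposition \ref{AnEigKur} asserts that $\mu_0\in C^\infty(0,\infty)$ has a single zero, located at $c_{crit}$, and that $\mu_0^\prime(c_{crit})>0$. The latter means $\mu_0$ is strictly negative immediately to the left of $c_{crit}$ and strictly positive immediately to the right; uniqueness of the zero and the intermediate value theorem then propagate these signs to all of $(0,c_{crit})$ and $(c_{crit},\infty)$ respectively. Since $c_{out}<c_{crit}<c_{in}$ by \eqref{crel}, this yields $\mu_0(c_{out})<0<\mu_0(c_{in})$.

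For $\mu_1(c_{in})<0$ I first note that Lemma \ref{SturmLiouville} gives the strict ordering $\mu_1(c)<\mu_0(c)$ for every $c>0$; evaluated at $c_{crit}$ this already yields $\mu_1(c_{crit})<0$. The task is therefore to transport this sign information from $c_{crit}$ to $c_{in}$, and the natural route is to show that $\mu_1$ has no zero on $(0,\infty)$. Suppose for contradiction that $\mu_1(c)=0$ for some $c>0$. Then $0$ is the second eigenvalue of \eqref{stabcat}, with an eigenfunction $v$ that has exactly one zero in $(-1,1)$ and satisfies $v(-z)=-v(z)$, again by Lemma \ref{SturmLiouville}. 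Rewriting \eqref{stabcat} in non-divergence form at $\mu=0$ gives precisely \eqref{shooting1}, whose general solution is \eqref{shooting2}. Extracting the odd part forces $C_1=0$ and leaves $v(z)=C_2\sinh(cz)$; the Dirichlet condition $v(\pm 1)=0$ then requires $\sinh(c)=0$, which is impossible for $c>0$.

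Hence $\mu_1(c)\neq 0$ for every $c>0$. Because the eigenvalues of a regular Sturm--Liouville problem depend continuously on smoothly varying coefficients, $c\mapsto\mu_1(c)$ is continuous on $(0,\infty)$, so the constant sign established at $c_{crit}$ persists, giving $\mu_1(c)<0$ for all $c>0$, and in particular $\mu_1(c_{in})<0$. The only genuinely delicate point is the exclusion of zeros of $\mu_1$; the rest is essentially bookkeeping with Proposition \ref{AnEigKur} and Lemma \ref{SturmLiouville}.
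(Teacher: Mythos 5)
Your argument is correct and follows essentially the same route as the paper: the signs of $\mu_0(c_{out})$ and $\mu_0(c_{in})$ are read off from Proposition~\ref{AnEigKur} and \eqref{crel}, and the negativity of $\mu_1(c_{in})$ comes from $\mu_1<\mu_0$ plus continuity plus the fact that $\mu_1$ never vanishes. The only cosmetic difference is how you exclude $\mu_1(c)=0$: you invoke the odd parity of the second eigenfunction to reduce to $C_2\sinh(cz)$ and contradict the Dirichlet condition, whereas the paper simply reuses step~(ii) of Proposition~\ref{AnEigKur}, which already shows that $0$ is an eigenvalue of \eqref{stabcat} only for $c=c_{crit}$ and that it is then the \emph{first} eigenvalue (the eigenfunction $w$ being even and zero-free) — both rest on the explicit shooting solution \eqref{shooting2}.
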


\begin{proof}
This follows from Proposition \ref{AnEigKur} and the fact that $c_{out} < c_{crit} < c_{in}$, see \eqref{crel}. Note that similar arguments as in step (i) of the proof of Proposition \ref{AnEigKur} guarantee the smoothness of the second eigencurve $[c \mapsto \mu_1(c)]$, which always lies below the first eigencurve $[c \mapsto \mu_0(c)]$. Because the first eigencurve is sign-changing and
the only eigencurve with a zero by step (ii) in the proof  of Proposition \ref{AnEigKur}, it follows that $0 > \mu_1(c_{in})$.
\end{proof}

In particular, $DF(u_{out}^0)$ has only strictly negative eigenvalues, while $DF(u_{in}^0)$ has exactly one strictly positive eigenvalue and all other eigenvalues are strictly negative.
Regarding the stability analysis of the catenoids, we end up with the following:

\begin{cor}
For $\sigma > \sigma_{crit}$ and $\lambda=0$, the inner catenoid $u_{in}^0$ is unstable whereas the outer catenoid $u_{out}^0$ is exponentially asymptotically stable in $W^2_{q,D}(-1,1)$. 
\end{cor}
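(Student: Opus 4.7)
The plan is to apply the principle of linearized stability as formulated in \cite[Theorems 9.1.2, 9.1.3]{Lunardi95} to the evolution equation
\begin{align*}
\partial_t v - DF(u_*^0) v = G(v)
\end{align*}
already derived above, where $-DF(u_*^0) \in \mathcal{H}\bigl(W^2_{q,D}(-1,1), L_q(-1,1)\bigr)$ and $G \in C^\infty\bigl(\mathcal{O}, L_q(-1,1)\bigr)$ with $G(0)=0$ and $DG(0)=0$ on a neighbourhood $\mathcal{O}$ of $0$ in $W^2_{q,D}(-1,1)$. All hypotheses of Lunardi's criterion are thus already in place, so what remains is to read off the location of the spectrum of $DF(u_*^0)$ in the two cases $c=c_{out}$ and $c=c_{in}$.

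For the outer catenoid, I would argue as follows. The operator $-DF(u_{out}^0)$ is a second order elliptic operator with smooth coefficients on the bounded interval $(-1,1)$ with Dirichlet boundary conditions, hence it has compact resolvent; therefore its spectrum coincides with its point spectrum. After multiplying by $-\operatorname{cosh}^2(c_{out}z)/\sigma^2$ (which does not change the sign of eigenvalues since this factor is strictly positive and smooth), the eigenvalue equation becomes precisely \eqref{stabcat}, and Lemma \ref{SturmLiouville} gives a sequence $\mu_0(c_{out}) > \mu_1(c_{out}) > \dots \to -\infty$. By Corollary \ref{Stabcatcor} we have $\mu_0(c_{out}) < 0$, so every eigenvalue of $DF(u_{out}^0)$ is strictly negative. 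Lunardi's \cite[Theorem 9.1.2]{Lunardi95} then yields exponential asymptotic stability of $0$ for the linearized equation in $v$, i.e.\ exponential asymptotic stability of $u_{out}^0$ in $W^2_{q,D}(-1,1)$.

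For the inner catenoid, Corollary \ref{Stabcatcor} provides the strictly positive eigenvalue $\mu_0(c_{in})$ of $DF(u_{in}^0)$. A positive eigenvalue of the generator means the linear semigroup has unstable direction, which is exactly the hypothesis of the instability theorem \cite[Theorem 9.1.3]{Lunardi95}. This immediately yields instability of the trivial solution of the $v$-equation, i.e.\ instability of $u_{in}^0$.

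The only genuine issue to check is that the spectrum of $DF(u_*^0)$ as an unbounded operator on $L_q(-1,1)$ with domain $W^2_{q,D}(-1,1)$ really is exhausted by the Sturm--Liouville eigenvalues $\mu_n(c)$; once this is in hand the result is immediate. This follows from compactness of the resolvent together with the observation that eigenfunctions of \eqref{stabcat} automatically lie in $W^2_{q,D}(-1,1)$ by elliptic regularity, so no spectrum is lost in passing from the $L_2$-based Sturm--Liouville framework of Lemma \ref{SturmLiouville} to the $L_q$-based semigroup framework; this is the single step I would write out carefully, after which both statements reduce to citing \cite[Theorems 9.1.2, 9.1.3]{Lunardi95}.
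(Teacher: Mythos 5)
Your proposal is correct and follows essentially the same route the paper takes: linearize as already set up, invoke Corollary~\ref{Stabcatcor} for the signs $\mu_0(c_{out})<0$ and $\mu_0(c_{in})>0>\mu_1(c_{in})$, note that the discrete spectrum of $DF(u_*^0)$ on $L_q$ is exactly the Sturm--Liouville spectrum of \eqref{stabcat}, and apply \cite[Theorems 9.1.2, 9.1.3]{Lunardi95}. The extra care you flag about passing from the $L_2$ Sturm--Liouville setting to the $L_q$ semigroup setting, and the compact-resolvent argument ensuring a strip in the resolvent set to the left of $\mu_0(c_{in})$, are exactly the points the paper addresses implicitly here and explicitly in the $\lambda>0$ instability proof, so no gap remains.
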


Finally, we come to our main purpose and show the corresponding properties of $u^\lambda_{in}$ and $u^\lambda_{out}$ for $\lambda>0$ sufficiently small:

\subsection{Proof of Theorem \ref{stability}.}
Letting $u^\lambda_*$ be either $u^\lambda_{in}$ or $u^\lambda_{out}$, the linearization of the dynamical version of \eqref{eq11}, see \eqref{dyn}, around $u^\lambda_*$ reads
\begin{align}
\partial_t v -\big (DF(u^\lambda_*)+ \lambda Dg(u^\lambda_*)\big) v &=  F(u^\lambda_* + v)- F(u^\lambda_*) - DF(u^\lambda_*)v \nonumber \\
&+\lambda \big (g(u^\lambda_* + v)- g(u^\lambda_*) - Dg(u^\lambda_*)v \big )=:G_\lambda(v)\,, \label{lin3}
\end{align}
where $F$ is given by \eqref{defF}. Thanks to \cite[Proposition 3.1]{LSS24b}, we find $G_\lambda \in C^\infty\big ( \mathcal{O}, L_q(-1,1) \big )$ 
for a small neighbourhood $\mathcal{O}$ of $0$ in $W^2_{q,D}(-1,1)$ satisfying $G_\lambda(0)= 0$ as well as $DG_\lambda(0)=0$. Moreover, since 
\begin{align*}
\Vert DF(u_*^\lambda) &+\lambda Dg(u_*^\lambda) - DF(u_{*}^0) \Vert_{\mathcal{L}(W^2_{q,D},L_q)} \\
&\leq \Vert DF(u_*^\lambda) - DF(u_*^0) \Vert_{\mathcal{L}(W^2_{q,D},L_q)} +\lambda \Vert  Dg(u_*^\lambda) \Vert_{\mathcal{L}(W^2_{q,D},L_q)} \rightarrow 0 \,,
\end{align*}
as $\lambda \rightarrow 0$ by Theorem \ref{exstatsL}, and $-DF(u_{*}^0) \in \mathcal{H}\big (W^2_{q,D}(-1,1),L_q(-1,1) \big)$, we deduce from \cite[Theorem 1.3.1\,(i)]{AmannLQPP} the existence of $\delta> 0$ such that  
$$-\big (DF(u_*^\lambda) +\lambda Dg(u_*^\lambda)\big )\in \mathcal{H}\big (W^2_{q,D}(-1,1),L_q(-1,1) \big)\,, \qquad \lambda \in  [0,\delta)\,.$$
We now investigate the stability of $u^\lambda_{in}$ and $u^\lambda_{out}$ separately:\\
 
{\bf(i)} {\it Instability of $u^\lambda_{in}$}: Due to Corollary \ref{Stabcatcor} and Lemma \ref{SturmLiouville}, the operator $DF(u_{in}^0)$ possesses a positive, isolated and algebraically simple eigenvalue so that the perturbation result \cite[Proposition A.3.2]{Lunardi95} 
for such eigenvalues allows to make $\delta > 0$ smaller such that $DF(u_{in}^\lambda) +\lambda Dg(u_{in}^\lambda)$ also has an eigenvalue with positive real part for $\lambda \in [0,\delta)$. Moreover, 
since the embedding $W^2_{q,D}(-1,1) \hookrightarrow L_q(-1,1)$ is compact, the spectrum of $DF(u_{in}^\lambda) +\lambda Dg(u_{in}^\lambda)$ consists only of eigenvalues with no finite accumulation point, see \cite[Theorem 6.29]{Kato95}. Thus, there is a constant $C>0$ such that the strip $\big \lbrace \mu \in \mathbb{C} \, \big \vert \, 0 < \mathrm{Re} \mu \, < C \big \rbrace$
is contained in the resolvent set of $DF(u^\lambda_{in})+\lambda Dg(u^\lambda_{in})$. Applying now \cite[Theorem 9.1.3]{Lunardi95} to \eqref{lin3} shows the instability of $u^\lambda_{in}$. \\

{\bf(ii)} {\it Stability of $u^\lambda_{out}$}: Since the spectral bound of $DF(u_{out}^0)$ is negative due to Corollary \ref{Stabcatcor}, it follows from \cite[Corollary 1.4.3]{AmannLQPP} that we may take $\delta >0$ so small that $DF(u_{out}^\lambda) +\lambda Dg(u_{out}^\lambda)$ also has a negative spectral bound for $\lambda \in [0,\delta)$. Hence, \cite[Theorem 9.1.2]{Lunardi95} implies that $u^\lambda_{out}$ is exponentially asymptotically stable.
\qed

\begin{bems}\label{CoV}
For $c_{out} < c_{crit}$, it is possible to apply the comparison principle for eigenvalues of Sturm-Liouville problems \cite[p.\,294]{Walter00} to get that $\mu_0(c_{out}) < \mu_0(c_{crit})=0$. The same result also follows from computing the second variation of the surface energy 
$$E_m(u)= \int_{-1}^1 (u+1) \,\sqrt{1+\sigma^2 (\partial_z u)^2} \,\mathrm{d} z \,, \qquad u(\pm1)=0$$
in $u_{out}^0$. However, both approaches do not apply to $\mu_0(c_{in})$.
\end{bems}

\end{section} 
\begin{section}{Direction of Deflection: Proof of Theorem \ref{DeflectionOut}}\label{SectionStationary3}

\subsection{Ansatz} 
Because $u^\lambda_{out}$ was constructed in Theorem \ref{exstatsL} by applying the implicit function theorem to the analytic function $\big[w \mapsto F(w)+\lambda g(w) \big]$, we may write
\begin{align*}
 u^\lambda_{out}= u_{out}^0 +\lambda \, \partial_\lambda u^0_{out} + o(\lambda) \,, \qquad \lambda \rightarrow 0 
\end{align*}
with
\begin{align}
 \partial_\lambda u^0_{out} = - \big (DF(u_{out}^0)\big )^{-1}\,g(u_{out}^0) \label{Auslenkung5}
\end{align}
in $W^2_{q,D}(-1,1)$. Here, $g$ is the electrostatic force, and $u_{out}^0$ is the outer catenoid. We recall from \eqref{defB2} that
\begin{align} DF(u_{out}^0) v= \sigma^2 \left [ \partial_z \Big( \frac{1}{\mathrm{cosh}^2(c_{out}z)} \partial_z v \Big ) + \frac{c_{out}^2}{\mathrm{cosh}^2(c_{out}z)}\,v\right ]\,, \label{261023a}
\end{align}
as well as $g(u_{out}^0)(z) \geq 0$, $z \in (-1,1)$ by \eqref{eq2.5}. Thus, the sign of 
\begin{align} u^\lambda_{out} - u_{out}^0= \lambda \big(-DF(u_{out}^0) \big)^{-1} g(u_{out}^0) +o(\lambda) \,, \qquad \lambda \rightarrow 0 \label{Auslenkung5a}
\end{align}
for small $\lambda$ is decided by positivity properties of $DF(u_{out}^0)$. Note that the scalar function $-\frac{c^2_{out}}{\mathrm{cosh}^2(c_{out}z)} <0$
appearing in the definition of $-DF(u_{out}^0)$ has the wrong sign for the common weak and strong maximum principles \cite[Theorem 6.4.2, Theorem 6.4.4]{Evans10} to apply. Instead, as $-DF(u_{out}^0)$ is of the form \eqref{261023a}, it falls in the class of operators investigated in \cite{Amann05}, and we can rely on a strong maximum principle from \cite{Amann05}. It is based on functional analysis and requires that $DF(u_{out}^0)$ has a negative spectral bound, which is true thanks to Corollary \ref{Stabcatcor}.

\begin{lem}\label{BqMax}
Let $f \in L_q(-1,1)$ with $f \geq 0$ a.e. and $f \not\equiv 0$. Then, the function 
$$v:=\big ( -DF(u_{out}^0) \big )^{-1} f \in W^2_{q,D}(-1,1)$$
satisfies 
$v(z) > 0$ for $z \in (-1,1)$ as well as $\partial_z v(-1) > 0$ and $\partial_z v(1) < 0$ . 
\end{lem}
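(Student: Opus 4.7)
The plan is to apply the functional-analytic strong maximum principle of \cite{Amann05}, which is tailored to divergence-form operators of the shape
\[ -DF(u_{out}^0) v = -\sigma^2 \partial_z\!\Big( \tfrac{1}{\mathrm{cosh}^2(c_{out}z)} \partial_z v \Big) - \tfrac{\sigma^2 c_{out}^2}{\mathrm{cosh}^2(c_{out}z)} v, \]
whose zero-order coefficient $-\sigma^2 c_{out}^2/\mathrm{cosh}^2(c_{out}z)$ is strictly negative, ruling out the classical weak and strong maximum principles of Evans-type.

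First, I would verify the sole abstract hypothesis of \cite{Amann05}: a strictly positive spectral bound of $A := -DF(u_{out}^0)$ regarded as an unbounded operator on $L_q(-1,1)$ with domain $W^2_{q,D}(-1,1)$. Since $A$ is a regular Sturm-Liouville operator, its spectrum coincides with the negatives of the eigenvalues $\mu_n(c_{out})$ provided by Lemma \ref{SturmLiouville}, so its spectral bound equals $-\mu_0(c_{out})$, which is strictly positive by Corollary \ref{Stabcatcor}. Consequently, $A$ is an isomorphism from $W^2_{q,D}(-1,1)$ onto $L_q(-1,1)$, so $v := A^{-1} f \in W^2_{q,D}(-1,1)$ is well-defined.

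The core step is then to invoke the conclusion of \cite{Amann05}. Because the principal eigenfunction $v_0^{c_{out}}$ of $A$ has no zero in $(-1,1)$ by Lemma \ref{SturmLiouville} and can thus be normalized to be strictly positive on $(-1,1)$, the theory in \cite{Amann05} implies that the resolvent $A^{-1}$ is strongly positive on $L_q(-1,1)$ in the sense that, for every $f \geq 0$ a.e.\ with $f \not\equiv 0$, the solution $v = A^{-1} f$ lies in the interior of the positive cone of $W^2_{q,D}(-1,1)$.

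Finally, I would translate this abstract positivity into the desired pointwise statements. Via the embedding $W^2_{q,D}(-1,1) \hookrightarrow C^1([-1,1])$, the interior of the positive cone in $W^2_{q,D}(-1,1)$ is characterized precisely by $v(z) > 0$ for $z \in (-1,1)$ together with the Hopf-type sign conditions $\partial_z v(-1) > 0$ and $\partial_z v(1) < 0$, which is exactly the conclusion of the lemma. The main obstacle I anticipate is not the spectral bound condition, which is handed to us for free by Corollary \ref{Stabcatcor}, but rather matching the abstract framework of \cite{Amann05} to our concrete Sturm-Liouville setting and extracting the Hopf-type boundary inequalities as part of the strong positivity statement.
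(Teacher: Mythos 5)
Your argument follows essentially the same route as the paper: verify the negative spectral bound of $DF(u_{out}^0)$ via Corollary~\ref{Stabcatcor}, note the embedding $W^2_{q,D}(-1,1)\hookrightarrow C^1([-1,1])$ for $q>2$, and conclude by invoking the strong maximum principle \cite[Theorem 15]{Amann05}, whose interior-of-the-cone conclusion yields precisely $v>0$ on $(-1,1)$ together with the Hopf-type signs of $\partial_z v(\pm 1)$. One small terminological caveat: the spectral bound (supremum of real parts) of $-DF(u_{out}^0)$ is not $-\mu_0(c_{out})$ but $+\infty$; the condition you actually need, and which you correctly reduce to Corollary~\ref{Stabcatcor}, is that $DF(u_{out}^0)$ has negative spectral bound, equivalently that the bottom of the spectrum of $-DF(u_{out}^0)$ equals $-\mu_0(c_{out})>0$.
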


\begin{proof}
Recall that $q > 2$, hence $W_{q,D}^2(-1,1) \hookrightarrow C^1([-1,1])$, and that the spectrum of $DF(u_{out}^0)$ is contained in $\big (-\infty, 0)$ thanks to Corollary \ref{Stabcatcor}. Now \cite[Theorem 15]{Amann05} yields the assertion.
\end{proof}

We check that the right-hand side $g(u_{out}^0)$ satisfies the conditions of the above lemma:

\begin{lem}\label{gnontrivial}
The function $g(u_{out}^0)$ belongs to $L_q(-1,1)$ with $g(u_{out}^0)\geq 0$ a.e. and $g(u_{out}^0) \not\equiv 0$.
\end{lem}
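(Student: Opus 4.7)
The membership $g(u_{out}^0) \in L_q(-1,1)$ is immediate from \cite[Proposition 3.1]{LSS24b}, and the pointwise inequality $g(u_{out}^0) \geq 0$ a.e.\ is built into the definition \eqref{eq2.5}: the latter writes $g(u_{out}^0)$ as the product of the strictly positive factor $(1+\sigma^2(\partial_z u_{out}^0)^2)^{3/2}$ with a square. Hence only the nontriviality is substantive.

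I plan to establish nontriviality pointwise on the film via the strong maximum principle and Hopf's boundary point lemma applied to \eqref{eq2}. Since $u_{out}^0$ is real-analytic on $[-1,1]$ and
\[
u_{out}^0(z) + 1 = \frac{\mathrm{cosh}(c_{out}z)}{\mathrm{cosh}(c_{out})} \geq \frac{1}{\mathrm{cosh}(c_{out})} > 0 \,,
\]
the operator $\partial_r^2 + r^{-1}\partial_r + \sigma^2 \partial_z^2$ is uniformly elliptic on $\Omega(u_{out}^0)$ with smooth bounded coefficients. Since $h_{u_{out}^0}$ equals $0$ on the film $\{r = u_{out}^0+1\}$ and $1$ on the cylinder $\{r = 2\}$, the strong maximum principle then forces $0 < \psi_{u_{out}^0} < 1$ throughout the interior of $\Omega(u_{out}^0)$.

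Fix now any $z_0 \in (-1,1)$. Near the boundary point $(z_0, u_{out}^0(z_0)+1)$ the film is a smooth arc that satisfies the interior sphere condition for $\Omega(u_{out}^0)$, and standard Schauder estimates (the boundary datum being locally the constant $0$) yield $\psi_{u_{out}^0} \in C^1$ up to this portion of the boundary. Hopf's lemma therefore gives the strict inequality $\partial_\nu \psi_{u_{out}^0}(z_0, u_{out}^0(z_0)+1) < 0$, where $\nu = (1+(\partial_z u_{out}^0(z_0))^2)^{-1/2}\, (\partial_z u_{out}^0(z_0), -1)$ is the outward unit normal to $\Omega(u_{out}^0)$. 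Differentiating the identity $\psi_{u_{out}^0}(z, u_{out}^0(z)+1) \equiv 0$ in $z$ yields the tangential relation $\partial_z\psi_{u_{out}^0} = -(\partial_z u_{out}^0)\, \partial_r\psi_{u_{out}^0}$ on the film; substituting this into $\partial_\nu\psi_{u_{out}^0} = \nu\cdot\nabla\psi_{u_{out}^0}$ gives
\[
\partial_\nu \psi_{u_{out}^0}(z_0, u_{out}^0(z_0)+1) = -\sqrt{1+(\partial_z u_{out}^0(z_0))^2}\; \partial_r \psi_{u_{out}^0}(z_0, u_{out}^0(z_0)+1) \,,
\]
so that $\partial_r \psi_{u_{out}^0}(z_0, u_{out}^0(z_0)+1) > 0$. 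Plugging this into \eqref{eq2.5} shows $g(u_{out}^0)(z_0) > 0$ for every $z_0 \in (-1,1)$, which is strictly stronger than the required nontriviality.

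The only delicate point is the boundary regularity of $\psi_{u_{out}^0}$ needed for Hopf's lemma, but this is routine at interior points of the film since $u_{out}^0$ is analytic and the boundary datum is locally the constant $0$ there; the corners at $z = \pm 1$ are irrelevant since the conclusion concerns only points with $z_0 \in (-1,1)$.
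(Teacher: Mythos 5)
Your proposal is correct and follows exactly the route the paper intends: the paper's entire proof of Lemma \ref{gnontrivial} reads ``This follows from \eqref{eq2} and Hopf's Lemma,'' and your argument is simply a careful unpacking of that one line, combining the strong maximum principle for \eqref{eq2} with Hopf's boundary point lemma at interior points of the film and then translating the normal derivative into $\partial_r\psi_{u_{out}^0}>0$. The details (the sign of the outward normal, the tangential relation obtained by differentiating the zero boundary datum, and the remark that the corners at $z=\pm1$ play no role) are all handled correctly.
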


\begin{proof}
This follows from \eqref{eq2} and Hopf's Lemma.
\end{proof}

\subsection{Proof of Theorem \ref{DeflectionOut}} The proof is similar to \cite[Theorem 1.3]{LSS24b}. From Lemma \ref{BqMax}, Lemma \ref{gnontrivial} and \eqref{Auslenkung5}, it follows that $\partial_z[\partial_\lambda u_{out}^0 ](1)< 0$ as well as $\partial_z[\partial_\lambda u_{out}^0 ](-1)>0$. Thanks to the embedding of $W^2_{q,D}(-1,1)$ in $C^1([-1,1])$\,, we find $\varepsilon > 0$ such that 
\begin{align}
\partial_z[\partial_\lambda u_{out}^0 ] (z) &\leq -4 \varepsilon \,, \qquad z \in (1-\varepsilon,1]\,, \notag \\
\partial_z[\partial_\lambda u_{out}^0 ] (z) &\geq\, 4 \varepsilon \,,\ \  \qquad z \in (-1,-1+\varepsilon]\,. \label{Auslenkung6}
\end{align}
Furthermore, since $\partial_\lambda u_{out}^0$ is continuous and strictly positive on $[-1+\varepsilon,1-\varepsilon]$ by Lemma \ref{BqMax}, we find $\tilde{\varepsilon} > 0$ such that
\begin{align}
\partial_\lambda u^0_{out}(z) \geq 4 \tilde{\varepsilon} \,, \qquad z \in [-1+\varepsilon,1-\varepsilon]\,. \label{Auslenkung7}
\end{align}
Finally, the continuity of $\big [(z,\lambda) \rightarrow \partial_\lambda u^\lambda_{out}(z)\big ]$ and $\big [(z,\lambda) \rightarrow \partial_z[\partial_\lambda u^\lambda_{out}](z)\big ]$ allows us to extend \eqref{Auslenkung6} and \eqref{Auslenkung7} to 
\begin{align}
\partial_z[\partial_\lambda u_{out}^\lambda ] (z) &\leq - 2\varepsilon \,, \qquad z \in (1-\varepsilon,1] \,, \qquad \  \lambda \in [0,\delta]\,,\notag \\
\partial_z[\partial_\lambda u_{out}^\lambda ] (z) &\geq\,  2\varepsilon \,,\ \  \qquad z \in [-1,-1+\varepsilon)\,, \ \ \, \lambda \in [0,\delta]\,, \label{Auslenkung8}
\end{align}
and 
\begin{align}
\partial_\lambda u^\lambda_{out}(z) \geq  2\tilde{\varepsilon} \,, \qquad z \in [-1+\varepsilon,1-\varepsilon]\,, \qquad \lambda \in [0,\delta]\,, \label{Auslenkung9}
\end{align}
for suitably chosen $\delta>0$. Let us now write 
\begin{align}
u^\lambda_{out} = u^{\overline{\lambda}}_{out} + \partial_\lambda u_{out}^{\overline{\lambda}}\, (\lambda-\overline{\lambda}) + R(\lambda,\overline{\lambda}) \label{tail}
\end{align}
in $W^2_{q,D}(-1,1) \hookrightarrow C^1\big ([-1,1]\big)$ with error term 
\begin{align*}
R(\lambda, \overline{\lambda} ):= \int_0^1 (1-t) \, \partial_\lambda^2\, u^{\overline{\lambda} +t(\lambda-\overline{\lambda})}_{out}  \,\mathrm{d} t \,(\lambda-\overline{\lambda})^2
\end{align*}
satisfying the uniform estimate
\begin{align*}
\frac{\ \ \Vert R(\lambda,\overline{\lambda})\Vert_{C^1}}{\vert \lambda-\overline{\lambda} \vert} \leq C \,\vert \lambda - \overline{\lambda} \vert 
\end{align*}
for some $C > 0$ independent of $\lambda\,, \, \overline{\lambda} \in [0, \delta]$. As a consequence, we can make $\delta >0$ smaller such that
\begin{align}
\frac{\ \ \Vert R(\lambda,\overline{\lambda})\Vert_{C^1}}{\vert \lambda-\overline{\lambda} \vert} \leq \mathrm{min} \big \lbrace \varepsilon, \tilde{\varepsilon} \big \rbrace \,, \qquad 0 < \lambda- \overline{\lambda}  \leq \delta\,, \quad \lambda \leq \delta \,. \label{Auslenkung10}
\end{align}
From \eqref{Auslenkung9}-\eqref{Auslenkung10}, it follows that 
\begin{align*}
\frac{u^\lambda_{out}(z)-u_{out}^{\overline{\lambda}}(z)}{\lambda -\overline{\lambda}}\geq \tilde{\varepsilon}\,, \qquad z \in [-1+\varepsilon, 1-\varepsilon]\,, 
\end{align*}
while \eqref{Auslenkung8} - \eqref{Auslenkung10} yield
\begin{align*}
\frac{\partial_z u^\lambda_{out}(z)-\partial_z u_{out}^{\overline{\lambda}}(z)}{\lambda -\overline{\lambda}}\geq \varepsilon  \,, \qquad z \in [-1,-1+\varepsilon)\,,
\end{align*}
as well as
\begin{align*}
\frac{\partial_z u^\lambda_{out}(z)-\partial_z u_{out}^{\overline{\lambda}}(z)}{\lambda -\overline{\lambda}}\leq -\varepsilon  \,, \qquad z \in (1-\varepsilon,1]\,.
\end{align*}
Here, all three estimates above hold for $0 < \lambda- \overline{\lambda}  \leq \delta$ and $\lambda \leq \delta$. From these estimates and the fact that $$u^\lambda_{out}(\pm 1) = u^{\overline{\lambda}}_{out}(\pm 1)=0\,,$$ we deduce 
\begin{align*}
u^\lambda_{out}(z) > u^{\overline{\lambda}}_{out}(z) \,, \qquad z \in (-1,1) \,, \quad 0 < \lambda- \overline{\lambda}  \leq \delta\,, \quad \lambda \leq \delta. 
\end{align*}
\qed

\section{Additional Results for the Small Aspect Ratio Model \eqref{A.56}}\label{SectionStationary4}

In this subsection, we focus on the small aspect ratio model \eqref{A.56}. The results and proofs of Theorem \ref{exstatsL}-Theorem \ref{DeflectionOut}  remain valid if $g(u)$ is replaced by $g_{sar}(u)$ from \eqref{A.6}.  In particular, there exists again a local curve of unstable stationary solutions $[\lambda \mapsto u^\lambda_{in}]$ emanating from $u_{in}^0$ in the small aspect ratio model. Note that, in general, this curve differs from the curve of stationary solutions emanating from $u_{in}^0$ in the full 
free boundary problem. \\

We aim at understanding in which direction $u^\lambda_{in}$ deflects in the small aspect ratio model \eqref{A.56}. Letting
\begin{align*}
g_{sar}(z):=g_{sar}(u_{in}^0)(z) =  \frac{\mathrm{cosh}^2(c_{in})}{\mathrm{cosh}(c_{in}z)} \frac{1}{\mathrm{ln}^2 \Big ( 2\,\frac{\mathrm{cosh}(c_{in})}{ \mathrm{cosh}(c_{in} z)}\Big )} > 0 \,, \qquad z \in (-1,1)\,,
\end{align*}
our starting point for the investigation of the direction of deflection is again the formula
\begin{align*}
\partial_\lambda u^0_{in} = \big (-DF(u_{in}^0) \big )^{-1} g_{sar}\,,
\end{align*}
which is analogue to \eqref{Auslenkung5}, and we are interested in the sign of $\partial_\lambda u^0_{in}$. Since $c_{in} > c_{crit}$, Corollary~\ref{Stabcatcor} implies now that $DF(u_{in}^0)$ has exactly 
one strictly positive eigenvalue and all other eigenvalues of $DF(u_{in}^0)$ are strictly negative so that the maximum principle from \cite{Amann05} fails. Instead, we apply a criterion for an anti-maximum principle from \cite{Shi04}, see Appendix \ref{AMP}. To this end, let
\begin{align}
\varphi(z):= \mathrm{cosh}(c_{in}z) - c_{in} \, z \, \mathrm{sinh}(c_{in}z) \label{DefPhi}
\end{align}
be the unique solution to the initial value problem
\begin{align}
\begin{cases}
&0 = - \partial_z \Big ( \displaystyle{\frac{1}{\mathrm{cosh}^2(c_{in}z)} \partial_z\varphi \Big ) - \frac{c_{in}^2}{\mathrm{cosh}^2(c_{in}z)}} \varphi \quad \text{on} \ (-1,1)\,,\\
&\varphi(0)= 1 \,, \quad \partial_z\varphi(0)=0\,,
\end{cases} \label{Auslenkung12alt}
\end{align}
associated with the boundary value differential operator $-DF(u_{in}^0)$. 
The function $\varphi$ is symmetric, has exactly two zeroes $z= \pm c_{crit}/c_{in}$ in $(-1,1)$ and is sign-changing. 
With $\varphi$ at hand, the criterion reads: 
\begin{align*}
 &\int_{-1}^1 g_{sar}(z)\varphi(z)\,\mathrm{d} z > 0 \qquad \Longrightarrow \qquad \partial_\lambda u^0_{in} <0\ \text{in}\ (-1,1)\,, \\
 &\int_{-1}^1 g_{sar}(z)\varphi(z)\,\mathrm{d} z < 0 \qquad \Longrightarrow \qquad \partial_\lambda u^0_{in} \ \text{is sign-changing in}\ (-1,1)\,.
\end{align*}

Dependent on the parameter $\sigma$, we get:

\begin{lem}\label{Auslenkung14}
{\bf(i)} There exists $\sigma_*  >\sigma_{crit}$ such that for each $\sigma \in (\sigma_{crit}, \sigma_*)$ the corresponding deflection $[\lambda \mapsto u^\lambda_{in}]$ in the small aspect ratio model \eqref{A.56} satisfies
\begin{align*}
\partial_\lambda u^0_{in}(z) <0 \,, \qquad z \in (-1,1)\,, \qquad \partial_z [\partial_\lambda u^0_{in}](-1) < 0\,, \qquad \partial_z [\partial_\lambda u^0_{in}](1) > 0\,.
\end{align*}
{\bf(ii)} There exists $\sigma^* > \sigma_{crit}$ such that for each $\sigma > \sigma^*$ and each corresponding deflection $[\lambda \mapsto u^\lambda_{in}]$ there exists
$r_0 \in (0,1)$, depending on $\sigma$, such that $\partial_\lambda u^0_{in} < 0$ on $(-r_0,r_0)$ and $\partial_\lambda u^0_{in} > 0$ on $(-1,-r_0) \cup (r_0,1)$ as well as
\begin{align*}
 &\partial_z [\partial_\lambda u^0_{in}](-1)  > 0 \,, \quad \partial_z [\partial_\lambda u^0_{in}](-r_0) < 0 \,, \\
 &\partial_z [\partial_\lambda u^0_{in}](r_0)>0 \,, \quad \partial_z [\partial_\lambda u^0_{in}](1) <0\,.
\end{align*}
Moreover, one has $\sigma^* \geq \sigma_*> \sigma_{crit}$. \color{black}
\end{lem}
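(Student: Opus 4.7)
Following the strategy of Section~\ref{SectionStationary3}, the starting point is the analogue of \eqref{Auslenkung5},
\begin{align*}
\partial_\lambda u^0_{in} = \bigl(-DF(u_{in}^0)\bigr)^{-1} g_{sar} \quad \text{in } W^2_{q,D}(-1,1),
\end{align*}
so everything reduces to determining the sign of this function. Since $DF(u_{in}^0)$ now has exactly one positive (simple) eigenvalue $\mu_0(c_{in})>0$ by Corollary~\ref{Stabcatcor}, the maximum principle exploited in Section~\ref{SectionStationary3} is no longer available, and we must invoke the anti-maximum principle criterion from Appendix~\ref{AMP} instead. As already recalled in the excerpt, that criterion converts the problem into the sign of
\begin{align*}
I(\sigma) := \int_{-1}^{1} g_{sar}(z)\,\varphi(z)\, \mathrm{d} z = \cosh^2(c_{in}) \int_{-1}^{1} \frac{1 - c_{in} z \tanh(c_{in} z)}{\ln^2\bigl(2\cosh c_{in}/\cosh(c_{in} z)\bigr)}\, \mathrm{d} z,
\end{align*}
viewed as a continuous function of $c_{in}\in(c_{crit},\infty)$ (equivalently of $\sigma$ via \eqref{261023b}). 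A direct calculation on \eqref{DefPhi} yields $\varphi'(z) = -c_{in}^2 z \cosh(c_{in} z)$, so $\varphi$ is even, strictly decreasing on $(0,1)$, with $\varphi(0)=1$ and $\varphi(\pm 1) = \cosh c_{in} - c_{in}\sinh c_{in}<0$ by \eqref{Defccrit} and $c_{in} > c_{crit}$; hence $\varphi$ has exactly two zeros, at $\pm r_0:=\pm c_{crit}/c_{in}\in(0,1)$.

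\textbf{Part (i) by continuity near $\sigma_{crit}$.} As $c_{in}\downarrow c_{crit}$, the zeros $\pm r_0$ migrate to $\pm 1$ and $\varphi$ becomes non-negative on $[-1,1]$, strictly positive on $(-1,1)$. Combined with $g_{sar}>0$, this gives a strictly positive value of the integral in the limit. Continuity of $c_{in}\mapsto I(\sigma)$ then produces $\sigma_*>\sigma_{crit}$ such that $I(\sigma)>0$ on $(\sigma_{crit},\sigma_*)$, and the first line of the criterion yields $\partial_\lambda u^0_{in}<0$ on $(-1,1)$. The boundary-derivative signs $\partial_z[\partial_\lambda u^0_{in}](\mp 1)\lessgtr 0$ then follow either from the $C^1$-version of the refined appendix statement or directly from Hopf's lemma applied to $-DF(u_{in}^0)$.

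\textbf{Part (ii) by asymptotics as $\sigma\to\infty$.} For large $c_{in}$, I split the integral at $z=r_0$. Substituting $y=c_{in}(1-z)$ on $[r_0,1]$ and using the uniform asymptotics $\ln\bigl(2\cosh c_{in}/\cosh(c_{in}z)\bigr) = \ln 2 + c_{in}(1-z) + o(1)$ together with $1 - c_{in}z\tanh(c_{in}z) = -c_{in}(1+o(1))$, the dominated convergence theorem gives
\begin{align*}
\int_{r_0}^{1} \frac{1-c_{in}z\tanh(c_{in}z)}{\ln^2(2\cosh c_{in}/\cosh(c_{in}z))}\,\mathrm{d} z \;\longrightarrow\; -\int_0^\infty \frac{\mathrm{d} y}{(\ln 2+y)^2} = -\frac{1}{\ln 2},
\end{align*}
whereas the positive central contribution from $|z|<r_0$ is bounded by $O(1/c_{in})$ because $L(z)\gtrsim c_{in}$ there. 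By symmetry the full integral satisfies $I(\sigma)/\cosh^2(c_{in})\to -2/\ln 2<0$, so $I(\sigma)<0$ for all $\sigma$ larger than some $\sigma^*\ge\sigma_*$, and the second line of the criterion gives that $\partial_\lambda u^0_{in}$ is sign-changing. The refined sign pattern claimed in (ii)---namely that the nodal set of $\partial_\lambda u^0_{in}$ is exactly $\{\pm r_0\}$ with $\partial_\lambda u^0_{in}$ opposite in sign to $\varphi$, and the stated derivative signs at $\pm 1,\pm r_0$---follows from the full version of Shi's anti-maximum principle in Appendix~\ref{AMP}, which pins the zero set of $(-DF(u_{in}^0))^{-1}g_{sar}$ to that of the test function $\varphi$.

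\textbf{Main obstacle.} The delicate step is the asymptotic analysis of $I(\sigma)$ for large $c_{in}$: the prefactor $\cosh^2(c_{in})$ blows up and must be absorbed by the reciprocal of $L(z)^2$, so I need to isolate the dominant boundary-layer contribution near $z=\pm 1$, verify the majorant required for dominated convergence uniformly in $c_{in}$, and confirm that the positive contribution from the central region $|z|<r_0$ stays subordinate despite $g_{sar}$ being largest there.
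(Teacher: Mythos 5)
Your reduction to the sign of $\int_{-1}^1 g_{sar}\,\varphi\,\mathrm{d}z$ via the anti-maximum principle, and your continuity argument for part~(i), coincide with the paper's. For part~(ii), however, your route differs from the paper's. You compute the large-$c_{in}$ limit of the normalized integral by a boundary-layer substitution $y=c_{in}(1-z)$ and dominated convergence, aiming at the explicit limit $-2/\ln 2$. The paper instead performs the substitution $z\mapsto c_{in}z$ so that $\int_{-1}^1 g_{sar}\varphi\,\mathrm{d}z = \tfrac{2\cosh^2 c_{in}}{c_{in}} I_1(\sigma)$ with $I_1(\sigma)=\int_0^{c_{in}}\bigl[1-z\tanh z\bigr]\ln^{-2}\!\bigl(2\cosh c_{in}/\cosh z\bigr)\mathrm{d}z$, then exploits that the weight $\ln^{-2}(\cdot)$ is monotone increasing in $z$ on $[0,c_{in}]$ to bound both the positive piece (on $[0,c_{crit}]$) and the negative piece (on $[c_{crit},c_{in}]$) from above by the constant weight $\ln^{-2}\bigl(2\cosh c_{in}/\cosh c_{crit}\bigr)$, yielding $I_1(\sigma)\le\ln^{-2}(\cdot)\int_0^{c_{in}}[1-z\tanh z]\,\mathrm{d}z$. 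The latter integral manifestly tends to $-\infty$ as $c_{in}\to\infty$, so $I_1(\sigma)<0$ for $\sigma$ large with no asymptotic matching or majorant verification at all. Your approach is correct in principle and even yields the sharper limiting value, but it carries exactly the technical burden you flag in your ``Main obstacle'' (a uniform $L^1$ majorant such as $C(\ln 2+\tanh(c_{crit})\,y)^{-2}$ is needed, and the central-piece bound is in fact $O(c_{in}^{-3})$, not $O(c_{in}^{-1})$, though both suffice); the paper's comparison argument sidesteps all of this.

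One genuine inaccuracy at the end of part~(ii): you assert that Lemma~\ref{Auslenkung13}(ii) ``pins the zero set of $(-DF(u_{in}^0))^{-1}g_{sar}$ to that of the test function $\varphi$'', i.e.\ that $r_0 = c_{crit}/c_{in}$. The appendix lemma does not say this and it is generally false: it only asserts the existence of \emph{some} $r_0\in(0,1)$ with the stated nodal and boundary-derivative pattern, and this $r_0$ is determined by $v=(-DF(u_{in}^0))^{-1}g_{sar}$, not by $\varphi$. This slip does not affect the truth of Lemma~\ref{Auslenkung14}(ii) as stated, since it also only requires existence of such an $r_0$, but the identification should be dropped.
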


\begin{proof}
For simplicity, we use the abbreviation $c=c_{in}$. We write 
 \begin{align}
  \int_{-1}^1 &g_{sar}(z) \,\varphi(z) \, \mathrm{d} z \notag \\
  &= \int_{-1}^1 \bigg (\frac{\mathrm{cosh}^2(c)}{\mathrm{cosh}(cz)} \, \frac{1}{\mathrm{ln}^2 \Big (2\,\frac{\mathrm{cosh}(c)}{\mathrm{cosh}(cz)} \Big )} \big [\mathrm{cosh}(cz) 
  -cz\,\mathrm{sinh}(cz) \big ]\bigg)\mathrm{d} z  \notag \\
  &=  \, \frac{\mathrm{cosh}^2(c)}{c} \, \int_{-c}^c \bigg (\frac{1}{\mathrm{cosh}(z)} \,\frac{1}{\mathrm{ln}^2 \Big ( 2\,\frac{ \mathrm{cosh}(c)}{ \mathrm{cosh}(z)} \Big )} \big [\mathrm{cosh}(z) 
  -z\,\mathrm{sinh}(z) \big ] \bigg) \mathrm{d} z   \notag \\
  &=  2\,\frac{\mathrm{cosh}^2(c)}{c} \,\int_{0}^c \bigg (\frac{1}{\mathrm{ln}^2 \Big ( 2\,\frac{ \mathrm{cosh}(c)}{ \mathrm{cosh}(z)} \Big )} \big [1- z\,\mathrm{tanh}(z) \big ] \bigg ) \mathrm{d} z\notag \\
  &=: 2\,\frac{\mathrm{cosh}^2(c)}{c} \, I_1(\sigma)\,, \label{Auslenkung15}
 \end{align}
where we recall from \eqref{261023b} and \eqref{crel} that $c=c_{in}$ is completely determined by being the largest solution to $\sigma=\frac{\mathrm{cosh}(c)}{c}$. Moreover, note that $2\,\frac{\mathrm{cosh}^2(c)}{c} >0$ is irrelevant for the sign of \eqref{Auslenkung15} and that 
\begin{align}
   1- z\,\mathrm{tanh}(z)\ \,\begin{cases} &\geq 0 \,, \quad z \in (0,c_{crit}]\,, \\        &< 0 \,, \quad z \in (c_{crit},c)\,, \end{cases} \label{Defccrita}
\end{align}
due to the choice of $c_{crit}$ in \eqref{Defccrit}\,. We first estimate $I_1(\sigma)$ from below and then from above:\\

{\bf(i)} From \eqref{Defccrita} we deduce that $I_1(\sigma_{crit}) > 0$. Since the integral $I_1(\sigma)$ depends continuously on $c=c_{in}$, hence continuously on $\sigma \geq \sigma_{crit}$, we find $\sigma_* >\sigma_{crit}$ with
$$I_1(\sigma) > 0 \,, \qquad \sigma \in (\sigma_{crit}, \sigma_*)\,.$$
Thus, \eqref{Auslenkung15} is positive for such $\sigma$ and the assertion follows from Lemma \ref{Auslenkung13}.\\

{\bf(ii)} For the estimate from above, we write
\begin{align*}
  I_1(\sigma)&=  \int_0^{c_{crit}}\bigg ( \frac{1}{\mathrm{ln}^2 \Big ( 2\,\frac{  \mathrm{cosh}(c)}{ \mathrm{cosh}(z)} \Big )} \big [1- z\,\mathrm{tanh}(z) \big ] \bigg) \mathrm{d} z  \\
 &\ \ +  \int_{c_{crit}}^c \bigg (\frac{1}{\mathrm{ln}^2 \Big ( 2\,\frac{  \mathrm{cosh}(c)}{  \mathrm{cosh}(z)} \Big )} \big [1- z\,\mathrm{tanh}(z) \big ] \bigg ) \mathrm{d} z   \\
 &= : I_2(\sigma) + I_3(\sigma)
\end{align*}
and deduce from \eqref{Defccrita} that the integrand in $I_2(\sigma)$ is positive, while the integrand in $I_3(\sigma)$ is negative. Since $\mathrm{ln}\Big (2\,\frac{\mathrm{cosh}(c)}{ \mathrm{cosh}(z)} \Big ) >0$ for all $z \in (0,c)$, we estimate
\begin{align*}
 I_2(\sigma)+I_3(\sigma) &\leq \frac{1}{\mathrm{ln}^2 \Big (2\, \frac{\mathrm{cosh}(c)}{ \mathrm{cosh}(c_{crit})} \Big )} \int_0^{c_{crit}} \big [1-z \, \mathrm{tanh}(z)\big] \, \mathrm{d} z \\
 &\ \ \, + \frac{1}{\mathrm{ln}^2 
 \Big ( 2\,\frac{ \mathrm{cosh}(c)}{\mathrm{cosh}(c_{crit})} \Big )} \int_{c_{crit}}^c \big [ 1-z \, \mathrm{tanh}(z)\big ] \, \mathrm{d} z \\
 &= \frac{1}{\mathrm{ln}^2 \Big ( 2\,\frac{\mathrm{cosh}(c)}{\mathrm{cosh}(c_{crit})} \Big )} \int_0^{c} \big [1-z \, \mathrm{tanh}(z)\big ] \, \mathrm{d} z\,.
\end{align*}
Now the right-hand side is negative if and only if 
\begin{align*}
 I_4(\sigma):= \int_0^c \big [1-z \,\mathrm{tanh}(z) \big ] \, \mathrm{d} z < 0\,.
\end{align*}
Because $\sigma \nearrow \infty$ implies $c=c_{in} \nearrow \infty$, the integral $I_4(\sigma)$ diverges to $-\infty$ and we find $\sigma^* \geq \sigma_{crit}$ such that $I_4(\sigma) <0$ for all $\sigma > \sigma^*$. Hence, \eqref{Auslenkung15} is negative for such values of $\sigma$
and the assertion follows from Lemma \ref{Auslenkung13}.
\end{proof}

Now, we come to the precise version of Theorem \ref{Auslenkung16simp}.
Based on Lemma \ref{Auslenkung14}, we describe the qualitative behaviour of $[\lambda \mapsto u^\lambda_{in}]$ in the small aspect ratio model \eqref{A.56} in case the parameter $\sigma$ is either sufficiently close to $\sigma_{crit}$ or sufficiently large.
The results are depicted in Figure \ref{figurdeflection}. In particular, for $\sigma$ sufficiently close to $\sigma_{crit}$, we discover a contrary behaviour to $u^\lambda_{out}$: The deflection $u^\lambda_{in}$ of $u_{in}^0$ is directed inwards instead of outwards.

\begin{thm}\label{Auslenkung16}
Let $\sigma>\sigma_{crit}$ be fixed and $\sigma_*,\sigma^*$ be as in Lemma \ref{Auslenkung14}.\\
{\bf(i)} If $\sigma < \sigma_*$, then there exists $\delta > 0$ such that
\begin{align*}
u_{in}^{\overline{\lambda}} (z) > u_{in}^{\lambda}(z) \,, \qquad 0 \leq \overline{\lambda} < \lambda < \delta \,, \quad z \in (-1,1)\,.
\end{align*} 
{\bf(ii)} If $\sigma > \sigma^*$, then there exist $\delta > 0$, $r_0 \in (0,1)$ and $n \in \mathbb{N}$ with $2/n < \min \lbrace r_0 , 1-r_0 \rbrace$ such that
\begin{align*}
 u_{in}^{\overline{\lambda}} (z) > u_{in}^{\lambda}(z) \,, \quad 0 \leq \overline{\lambda} < \lambda < \delta \,, \quad z \in  [-r_0 +1/n , r_0-1/n]
\end{align*}
as well as 
\begin{align*}
\qquad \quad  \ \ \, u_{in}^{\overline{\lambda}} (z) < u_{in}^{\lambda}(z) \,, \quad 0 \leq \overline{\lambda} < \lambda < \delta \,, \quad z \in  (-1,-r_0-1/n] \cup  [r_0+1/n, 1)\,.
\end{align*}
Moreover, $u^\lambda_{in}$ intersects $u^{\overline{\lambda}}_{in}$ on $(-1,1)$ in exactly two points $z_1, z_2$ with 
\begin{align*}
z_1 \in (-r_0-1/n,-r_0+1/n)\,, \qquad z_2 \in (r_0-1/n, r_0+1/n)\,,
\end{align*}
and $u_{in}^\lambda$ is strictly decreasing on $[-r_0-1/n, -r_0 + 1/n]$ as well as strictly increasing on $[r_0-1/n, r_0 + 1/n]$.
\end{thm}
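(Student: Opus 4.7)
The plan is to mirror the argument of Theorem \ref{DeflectionOut}, but with sign information supplied by Lemma \ref{Auslenkung14} in place of Lemma \ref{BqMax}. Throughout, I write $v^\lambda := \partial_\lambda u^\lambda_{in}$, which is a continuous map $[0,\delta] \to W^2_{q,D}(-1,1) \hookrightarrow C^1([-1,1])$ by the analyticity from Theorem~\ref{exstatsL}. The key identity is the Taylor expansion
\begin{equation*}
u^\lambda_{in}(z) - u^{\overline{\lambda}}_{in}(z) = (\lambda-\overline{\lambda})\, v^{\overline{\lambda}}(z) + R(\lambda,\overline{\lambda})(z), \qquad \frac{\|R(\lambda,\overline{\lambda})\|_{C^1}}{|\lambda-\overline{\lambda}|} \leq C|\lambda-\overline{\lambda}|,
\end{equation*}
uniformly for $\lambda, \overline{\lambda} \in [0,\delta]$, together with uniform continuity of $[\lambda \mapsto v^\lambda]$ in $C^1([-1,1])$.

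For part (i), the situation $\sigma < \sigma_*$ is essentially the outer-catenoid case with reversed signs: Lemma~\ref{Auslenkung14}(i) gives $v^0 < 0$ on $(-1,1)$, $\partial_z v^0(-1) < 0$, and $\partial_z v^0(1) > 0$. Pick $\varepsilon > 0$ so that $\partial_z v^0 \leq -4\varepsilon$ near $z=-1$ and $\partial_z v^0 \geq 4\varepsilon$ near $z=1$, while $v^0 \leq -4\tilde\varepsilon$ on the complementary closed interval. By continuity of $[\lambda\mapsto v^\lambda]$ in $C^1$, halve these bounds and keep them valid for $\lambda\in[0,\delta]$ after shrinking $\delta$; shrink once more to absorb the remainder $R$ via the displayed estimate. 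The three resulting inequalities force $u^{\overline{\lambda}}_{in} - u^\lambda_{in}$ to be strictly positive on $[-1+\varepsilon,1-\varepsilon]$ and its derivative at the endpoints to have the right sign to propagate positivity to the two remaining boundary intervals, using $u^\lambda_{in}(\pm 1) = u^{\overline{\lambda}}_{in}(\pm 1) = 0$. This is formally identical to the proof of Theorem~\ref{DeflectionOut}, only with $\leq$ replaced by $\geq$ and vice versa.

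For part (ii), under $\sigma > \sigma^*$, Lemma~\ref{Auslenkung14}(ii) provides $r_0 \in (0,1)$ with $v^0 < 0$ on $(-r_0,r_0)$ and $v^0 > 0$ on $(-1,-r_0)\cup(r_0,1)$, together with four boundary-derivative inequalities at $\pm 1, \pm r_0$. I split $[-1,1]$ into five compact pieces: the four \emph{boundary-type} intervals $[-1,-r_0-1/n]$, $[-r_0+1/n,-s]$, $[s,r_0-1/n]$, $[r_0+1/n,1]$ (for small $s>0$ absorbed into the symmetric interior), and two \emph{transition} intervals $[-r_0-1/n,-r_0+1/n]$, $[r_0-1/n,r_0+1/n]$ around the zeroes of $v^0$. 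On the boundary-type pieces $v^0$ is bounded strictly away from zero with the appropriate sign; a uniform-in-$\lambda$ version of this via the $C^1$-continuity of $v^\lambda$, combined with the Taylor remainder bound, yields the claimed strict inequalities. Choosing $n$ so large that $2/n < \min\{r_0, 1-r_0\}$ ensures these pieces cover everything outside the transition intervals.

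The main obstacle is the transition intervals: here $v^0$ vanishes and the direct comparison breaks down, so intersections must be counted by monotonicity. By Lemma~\ref{Auslenkung14}(ii) one has $\partial_z v^0(-r_0) < 0$ and $\partial_z v^0(r_0) > 0$, so after shrinking $n$ and $\delta$ I obtain uniformly $\partial_z v^\lambda \leq -\eta < 0$ on $[-r_0-1/n,-r_0+1/n]$ and $\partial_z v^\lambda \geq \eta > 0$ on $[r_0-1/n,r_0+1/n]$ for $\lambda \in [0,\delta]$. Differentiating the Taylor identity in $z$ yields
\begin{equation*}
\partial_z\bigl(u^\lambda_{in} - u^{\overline{\lambda}}_{in}\bigr)(z) = (\lambda-\overline{\lambda}) \partial_z v^{\overline{\lambda}}(z) + \partial_z R(\lambda,\overline{\lambda})(z),
\end{equation*}
and, after absorbing $\partial_z R$ into $\partial_z v^{\overline{\lambda}}$ using the remainder bound and shrinking $\delta$ once more, the sign of $\partial_z(u^\lambda_{in} - u^{\overline{\lambda}}_{in})$ agrees with that of $\partial_z v^{\overline{\lambda}}$ on each transition interval. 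Hence $u^\lambda_{in} - u^{\overline{\lambda}}_{in}$ is strictly monotone there; since it is negative on the inner endpoint and positive on the outer endpoint of each transition interval by part of the argument just established, the intermediate value theorem gives exactly one zero $z_j$ per transition interval, and the strict monotonicity of $u^\lambda_{in}$ itself on these intervals follows from the same sign analysis applied to $\partial_z u^\lambda_{in} = \partial_z u^{\overline{\lambda}}_{in}|_{\overline{\lambda}=0} + \lambda\, \partial_z v^0 + o(\lambda)$ with $\partial_z u^0_{in}(\pm r_0)=0$ balanced against $\lambda\, \partial_z v^0(\pm r_0)$ of definite sign.
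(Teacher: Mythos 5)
Your overall architecture matches the paper's proof closely: both establish, via the Taylor expansion of $\lambda \mapsto u^\lambda_{in}$ in $C^1([-1,1])$ and the sign information from Lemma~\ref{Auslenkung14}\,(ii), that the difference $u^\lambda_{in}-u^{\overline\lambda}_{in}$ has definite sign away from $\pm r_0$, while its $z$-derivative has definite sign on the two transition intervals around $\pm r_0$, which pins down exactly one sign change in each. Part~(i) is also handled exactly as in the paper (mirror of Theorem~\ref{DeflectionOut} with signs flipped). So there is no structural difference.

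There is, however, a concrete factual error in your final step. You claim that strict monotonicity of $u^\lambda_{in}$ on the transition intervals follows from ``$\partial_z u^0_{in}(\pm r_0)=0$ balanced against $\lambda\,\partial_z v^0(\pm r_0)$ of definite sign.'' But $\partial_z u^0_{in}(z) = \sinh(c_{in}z)/\sigma$ vanishes only at $z=0$; since $r_0\in(0,1)$ one has $\partial_z u^0_{in}(r_0)>0$ and $\partial_z u^0_{in}(-r_0)<0$, not zero. You may have conflated $r_0$ (a zero of $v^0=\partial_\lambda u^0_{in}$) with the vertex $z=0$ of the catenoid, or with the zeroes $\pm c_{crit}/c_{in}$ of the anti-maximum test function $\varphi$. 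The conclusion you want is still true and its correct justification is actually simpler than the balancing argument you set up: on $[r_0-1/n,r_0+1/n]\subset(0,1)$ both $\partial_z u^0_{in}>0$ and (for $n$ large) $\partial_z v^0>0$ hold, so $\partial_z u^\lambda_{in}>\partial_z u^0_{in}>0$ by the uniform derivative estimate with $\overline\lambda=0$; symmetrically $\partial_z u^\lambda_{in}<\partial_z u^0_{in}<0$ on $[-r_0-1/n,-r_0+1/n]$. This is precisely what the paper does. Replace the erroneous sentence with this observation and the proof is complete.
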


\begin{proof}
 {\bf(i)} By Lemma \ref{Auslenkung14}\,(i), this follows exactly as in the proof of Theorem \ref{DeflectionOut}.\\
 {\bf(ii)} The argument is again quite similar to the one in Theorem \ref{DeflectionOut}: 
 First, we use Taylor's expansion as in Theorem \ref{DeflectionOut} together with Lemma \ref{Auslenkung14}\,(ii) to deduce the existence of $r_0 \in (0,1)$ and $n \in \mathbb{N}$ with $1/n$ small enough (which replaces $\varepsilon$ from the proof of Theorem \ref{DeflectionOut}) as well as $\delta > 0$ such that 
 \begin{align}
 &u^\lambda_{in}(z) < u^{\overline{\lambda}}_{in}(z) \,, \qquad\qquad z \in [-r_0+1/n,r_0-1/n]\,, \label{Auslenkung17}\\ 
 &u^\lambda_{in}(z) > u^{\overline{\lambda}}_{in}(z) \,, \qquad\qquad z \in [-1+1/n,-r_0-1/n] \cup [r_0+1/n,1-1/n]\,,\label{Auslenkung18}\\
 &\partial_z u^\lambda_{in}(z) > \partial_z u^{\overline{\lambda}}_{in}(z) \,, \qquad z \in  [-1,-1+1/n] \cup [r_0-1/n,r_0+1/n] \,,\label{Auslenkung19}\\
 &\partial_z u^\lambda_{in}(z) < \partial_z u^{\overline{\lambda}}_{in}(z) \,, \qquad z \in  [-r_0-1/n,-r_0+1/n] \cup [1-1/n,1]\,,  \label{Auslenkung20}
 \end{align}
for $0 \leq \overline{\lambda} < \lambda < \delta$. Next, we deduce from \eqref{Auslenkung18} - \eqref{Auslenkung20} and the fact that $u^\lambda_{in}$ as well as $u^{\overline{\lambda}}_{in}$ satisfy Dirichlet boundary conditions that 
\begin{align}
\qquad \quad  \ \ \, u_{in}^{\overline{\lambda}} (z) < u_{in}^{\lambda}(z) \,, \quad  z \in  (-1,-r_0-1/n] \cup  [r_0+1/n, 1)\,, \label{Auslenkung21}
\end{align}
for $0 \leq \overline{\lambda} < \lambda < \delta$. Moreover, since $$u^0_{in}(z)= \frac{\mathrm{cosh}(c_{in}z)}{\mathrm{cosh}(c_{in})} -1$$ with derivative 
\begin{align*}
\partial_z u^0_{in}(z)= \frac{\mathrm{sinh}(c_{in}z)}{\sigma}\quad \begin{cases} &\leq 0 \,, \qquad z \leq 0\,, \\
&>0 \,, \qquad z >0\,,\end{cases}
\end{align*}
we infer from \eqref{Auslenkung19} with $\overline{\lambda}=0$ that $u^\lambda_{in}$ is strictly increasing on $[r_0-1/n, r_0+1/n]$. Similarly, \eqref{Auslenkung20} yields that $u^\lambda_{in}$ is strictly decreasing on $ [-r_0-1/n,-r_0+1/n]$. 
It remains to study the intersection points of $u^\lambda_{in}$ and $u^{\overline{\lambda}}_{in}$. To this end, we deduce from \eqref{Auslenkung17} and \eqref{Auslenkung21} that $u^\lambda_{in}$ and $u^{\overline{\lambda}}_{in}$ may only intersect on
 \begin{align*}
(-r_0-1/n,-r_0+1/n)\cup (r_0-1/n, r_0+1/n) \subset (-1,1)\,.
\end{align*}
Thanks to \eqref{Auslenkung17} and \eqref{Auslenkung21}, we find
\begin{align*}
u^\lambda_{in}(-r_0-1/n) >  u^{\overline{\lambda}}_{in}(-r_0-1/n) \,, \qquad   u^\lambda_{in}(-r_0+1/n)  < u^{\overline{\lambda}}_{in}(-r_0+1/n)
\end{align*} 
for $0 \leq \overline{\lambda} < \lambda < \delta$. Consequently, \eqref{Auslenkung20} yields that $u^\lambda$ and $u^{\overline{\lambda}}$ have exactly one intersection point $z_1$ in $(-r_0-1/n, -r_0+1/n)$. 
Finally, note that the existence of the second intersection point $z_2$ in $(r_0-1/n, r_0+1/n)$ follows similarly.
\end{proof}

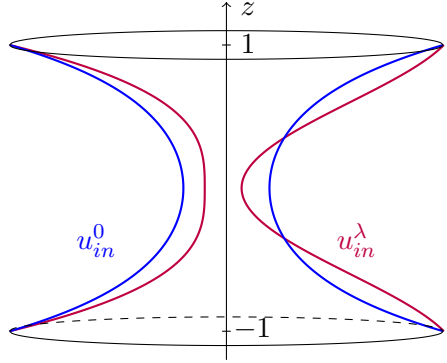
\begin{figure}[h]
\begin{tikzpicture}
\begin{axis}[samples=200, axis x line=none, axis y line=none, domain=-1:1]
\addplot[color=purple, thick]({0.5*sin(150*x-90)+0.57},{x});
\addplot[color=blue, thick]({cosh(2.3*x)/cosh(2.3)},{x});
\addplot[color=purple, thick]({-(x^2)^1.5*0.9-0.1},{x});
\addplot[color=blue, thick]({-cosh(2.3*x)/cosh(2.3)},{x});
\addplot[color=black]({x},{1-0.1*(1-x^2)^0.5});
\addplot[color=black]({x},{1+0.1*(1-x^2)^0.5});
\addplot[color=black]({x},{-1-0.1*(1-x^2)^0.5});
\addplot[color=black, dashed]({x},{-1+0.1*(1-x^2)^0.5});
\addplot[->, domain=-1.2:1.3]({0},{x});
\addplot[domain=-0.02:0.02]({x},{-1});
\addplot[domain=-0.02:0.02]({x},{1});
\node[above] at (112, 7) (p1) { $-1$ };
\node[above] at (110, 207) (p2) { $1$};
\node[above] at (110, 234) (p2) { $z$};
\node[above,blue] at (40, 65) (p5) { $u_{in}^0$};
\node[above,purple] at (160, 65) (p6) { $u^\lambda_{in}$};
\end{axis}
\end{tikzpicture}
\caption[Deflections Emanating from $u_{in}^0$ in the Small Aspect Ratio Model]{Qualitative behaviour of the deflection $u^\lambda_{in}$ (red) of the inner catenoid $u_{in}^0$ (blue) for small applied voltages in the small aspect ratio model \eqref{A.56}. On the left a possible deflection for $\sigma \in (\sigma_{crit},\sigma_*)$ is depicted, while on the right a possible deflection for $\sigma \in (\sigma^*,\infty)$ is shown. 
For $\sigma \in (\sigma_*,\sigma^*)$, the qualitative behaviour of the deflection is unknown. The cylinder is not depicted in this graphic.}\label{figurdeflection}
\end{figure}
\end{section}

\section*{Acknowledgement}
This paper contains results and edited text from my PhD-thesis. I express my gratitude to my PhD-supervisor Christoph Walker for his support.

\appendix 

\section{Anti-Maximum Principle}\label{AMP}

 Recall from \eqref{DefPhi} that
\begin{align*}
\varphi(z)= \mathrm{cosh}(c_{in}z) - c_{in} \, z \, \mathrm{sinh}(c_{in}z) 
\end{align*}
is the unique solution to 
\begin{align*}
\begin{cases}
&0 = - \partial_z \Big ( \displaystyle{\frac{1}{\mathrm{cosh}^2(c_{in}z)} \partial_z \varphi \Big ) - \frac{c_{in}^2}{\mathrm{cosh}^2(c_{in}z)}} \varphi \quad \text{on} \ (-1,1)\,,\\
&\varphi(0)= 1 \,, \quad \partial_z \varphi(0)=0
\end{cases} 
\end{align*}

with $c_{in}>0$. Moreover, recall that
\begin{align*}
-DF(u_{in}^0) v = -\sigma^2\, \bigg [ \partial_z \Big ( \frac{1}{\mathrm{cosh}^2(c_{in}z)} \,\partial_z v\Big ) + \frac{c_{in}^2}{\mathrm{cosh}^2(c_{in}z)} v \bigg ]\,, \qquad v \in W^2_{q,D}(-1,1)
\end{align*}
from \eqref{defB2}. We present a (slightly adapted) criterion from \cite[Theorem 2.3]{Shi04} to decide whether or not an anti-maximum principle applies to $-DF(u^0_{in})$:

\begin{lem}\label{Auslenkung13}$($\cite{Shi04}$)$\\
Let $f \in C\big ([-1,1]\big )$ with $f(z)=f(-z)>0$ for each $z \in [-1,1]$ and consider the even function $v:= \big (-DF(u_{in}^0) \big )^{-1} f \in W^2_{q,D}(-1,1) \cap C^2 \big ([-1,1] \big )$.  \\
{\bf(i)} If 
 \begin{align*}
  \int_{-1}^1  f(z)\, \varphi(z) \,\mathrm{d} z > 0\,,
\end{align*}
then $v < 0$ on $(-1,1)$ with $\partial_z v(-1) < 0$ and $\partial_zv(1) > 0$. \\
{\bf(ii)} If 
\begin{align*}
  \int_{-1}^1  f(z)\, \varphi(z) \,\mathrm{d} z < 0\,,
\end{align*}
then $v$ is sign-changing: there exists $r_0 \in (0,1)$ such that $v < 0$ on $(-r_0,r_0)$ and $v>0$ on $(-1,-r_0) \cup (r_0,1)$ as well as
\begin{align*}
 &\partial_z v(-1)  > 0 \,, \quad \partial_z v(-r_0) < 0 \,, \\
 &\partial_z v(r_0)>0 \,, \quad \partial_z v (1) <0\,.
\end{align*} 
\end{lem}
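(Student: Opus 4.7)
The plan is to exploit the symmetry of the problem together with an explicit Green's identity against $\varphi$, and then to reduce the entire qualitative question about $v$ to a one-dimensional analysis of the scalar ratio $v/\varphi$. Write $L := -\sigma^{-2}DF(u_{in}^0) = -\partial_z(p\partial_z\cdot) - q\cdot$ with $p(z) := 1/\cosh^2(c_{in}z)$ and $q(z) := c_{in}^2/\cosh^2(c_{in}z)$. By evenness of $f$ and of the coefficients of $L$, the unique solution $v \in W^2_{q,D}(-1,1)$ is even, so it suffices to argue on $[0,1]$ with Neumann condition at $0$ and Dirichlet condition at $1$. Set $z_0 := c_{crit}/c_{in} \in (0,1)$; then $\varphi > 0$ on $[0,z_0)$, $\varphi < 0$ on $(z_0,1]$, $\varphi'(z_0) < 0$, $\varphi(1) < 0$.

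Green's second identity on $[0,z]$, applied to $v$ and $\varphi$ and using $L\varphi = 0$ together with $v'(0) = \varphi'(0) = 0$, produces the key formula
\begin{equation*}
 \Phi(z)\;:=\;\frac{1}{\sigma^2}\int_0^z f\varphi\,\mathrm{d}s \;=\; p(z)\bigl(v(z)\varphi'(z) - v'(z)\varphi(z)\bigr).
\end{equation*}
Evaluating at $z = z_0$ gives $v(z_0) < 0$ unconditionally, since $\Phi(z_0) > 0$, $p(z_0) > 0$, $\varphi'(z_0) < 0$. Evaluating at $z = 1$ gives $\operatorname{sign}\partial_z v(1) = \operatorname{sign}\int_{-1}^1 f\varphi$, and $\partial_z v(-1) = -\partial_z v(1)$; this already yields the boundary-derivative assertions in both (i) and (ii). The value $v(0)$ is extracted by repeating Green's identity on $[0,1]$ with the second homogeneous solution $\eta := \alpha\varphi + \omega$, $\alpha := -\omega(1)/\varphi(1) > 0$, $\omega(z) := \sinh(c_{in}z)$; since $\eta(1) = 0$ and $\eta'(0) = c_{in}$, this gives $v(0) = -(c_{in}\sigma^2)^{-1}\int_0^1 f\eta$. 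Sturm separation applied to $\varphi$ and $\eta$, both in the kernel of $L$, combined with $\eta(0) = \alpha > 0$, $\eta(z_0) = \omega(z_0) > 0$, and the fact that $\varphi$'s only zero in $[0,1]$ is $z_0$, forces $\eta > 0$ on $[0,1)$, whence $v(0) < 0$.

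The qualitative heart of the argument is the identity $(v/\varphi)' = -\Phi/(p\varphi^2)$, which equates the monotonicity of $v/\varphi$ on each of $(0,z_0)$ and $(z_0,1)$ with the sign of $\Phi$. Since $\Phi' = f\varphi/\sigma^2$, $\Phi$ starts at $\Phi(0) = 0$, increases on $(0,z_0)$ to $\Phi(z_0) > 0$, and then decreases on $(z_0,1)$ to $\Phi(1) = \tfrac12\int_{-1}^1 f\varphi/\sigma^2$. In case (i) one has $\Phi > 0$ throughout $(0,1]$, so $v/\varphi$ is strictly decreasing on each component interval; combined with $v/\varphi|_{z=0} = v(0) < 0$, $v/\varphi \to -\infty$ as $z \uparrow z_0$, $v/\varphi \to +\infty$ as $z \downarrow z_0$ (forced by $v(z_0) < 0$ and the sign change of $\varphi$ at $z_0$), and $v/\varphi|_{z=1} = 0$, this gives $v/\varphi < 0$ on $(0,z_0)$ and $v/\varphi > 0$ on $(z_0,1)$; hence $v = (v/\varphi)\varphi < 0$ on both pieces, and together with $v(0), v(z_0) < 0$ and evenness this yields $v < 0$ on $(-1,1)$.

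In case (ii) one has $\Phi(1) < 0$, so $\Phi$ has exactly one zero $r_0' \in (z_0,1)$ with $\Phi > 0$ on $(0,r_0')$ and $\Phi < 0$ on $(r_0',1)$; consequently $v/\varphi$ decreases on $(0,r_0')$ and increases on $(r_0',1)$. The analysis on $(0,z_0]$ is unchanged and gives $v < 0$ there. On $(z_0,1)$, $v/\varphi$ decreases from $+\infty$ to a \emph{strictly negative} value at $r_0'$ (strictly negative because the subsequent strictly increasing segment must reach $v(1)/\varphi(1) = 0$ from below) and then returns to $0$ at $z = 1$; it therefore crosses zero exactly once, at a point $r_0 \in (z_0,r_0')$. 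The asserted sign structure $v < 0$ on $(-r_0,r_0)$ and $v > 0$ on $(-1,-r_0)\cup(r_0,1)$ follows, and the derivative signs at $\pm r_0$ and $\pm 1$ are read off from $v' = (v/\varphi)'\varphi + (v/\varphi)\varphi'$ evaluated at these points. The main obstacle is the Sturm-separation step certifying $\eta > 0$ on $[0,1)$; once this is in place, everything else reduces to a careful but purely scalar analysis of $v/\varphi$.
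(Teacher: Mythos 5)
Your argument is correct, and it is a genuinely different, self-contained proof rather than the route taken in the paper. The paper cites \cite[Theorem 2.3]{Shi04} directly for part (i) and, for part (ii), imports from the \emph{proof} of \cite[Theorem 2.3]{Shi04} the structural dichotomy that the negativity set $I_-=\{v<0\}$ is either $(-1,1)$ or an interval $(-r_0,r_0)$; the paper then uses one Lagrange identity against $\varphi$ on $[0,1]$ to fix $\operatorname{sign}\partial_zv(\pm1)$, rules out $I_-=(-1,1)$, and closes with a pointwise maximum-principle-type contradiction at an interior tangency. You instead reconstruct the full qualitative picture from scratch: the running Green identity $\Phi(z)=p(z)\bigl(v\varphi'-v'\varphi\bigr)(z)$, the monotonicity formula $(v/\varphi)'=-\Phi/(p\varphi^2)$, the unconditional sign $v(z_0)<0$ at the zero $z_0=c_{crit}/c_{in}$ of $\varphi$, and the determination of $v(0)<0$ via the second kernel element $\eta=\alpha\varphi+\omega$ with $\omega(z)=\sinh(c_{in}z)$. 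I checked each step: $\omega$ does lie in $\ker L$ (see \eqref{shooting2}), the boundary evaluations give $\Phi(z_0)=p(z_0)v(z_0)\varphi'(z_0)>0$ hence $v(z_0)<0$, $\Phi(1)=-p(1)\varphi(1)v'(1)$ hence $\operatorname{sign}v'(1)=\operatorname{sign}\int_{-1}^1 f\varphi$, and the Green identity with $\eta$ yields $v(0)=-(c_{in}\sigma^2)^{-1}\int_0^1 f\eta$. Your Sturm-separation step does work (using $\eta(0)=\alpha>0$, $\eta(z_0)=\sinh(c_{in}z_0)>0$, $\eta(1)=0$ and the uniqueness of $\varphi$'s zero), though it can be replaced by the one-line computation $\eta'(z)=c_{in}\cosh(c_{in}z)(1-\alpha c_{in}z)$, which shows $\eta$ increases then decreases on $[0,1]$ and hence stays positive on $[0,1)$. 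The case split on $\operatorname{sign}\Phi(1)$ and the subsequent analysis of $v/\varphi$ correctly reproduce both the sign pattern of $v$ and the boundary/interface derivative signs. The net gain of your approach is that it does not black-box the key dichotomy from \cite{Shi04}, and it produces $r_0$ constructively as the unique zero of $v/\varphi$ on $(z_0,r_0')$; the paper's proof is shorter but leans on an external result for its structural core.
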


\begin{proof} Put $c=c_{in}$. Since $f$ is strictly positive and Corollary \ref{Stabcatcor}
ensures that $DF(u_{in}^0)$ has exactly one strictly positive eigenvalue, while all other eigenvalues are strictly negative, we can rely on \cite{Shi04}:\\
{\bf(i)} See \cite[Theorem 2.3]{Shi04}. \\
{\bf(ii)} In the proof of \cite[Theorem 2.3]{Shi04}, it is shown that the set
\begin{align*}
I_-:= \big \lbrace z \in (-1,1) \, \big \vert \, v(z)<0 \big \rbrace 
\end{align*}
coincides either with $(-1,1)$ or with $(-r_0,r_0)$ for some $0 <r_0 <1$. Because $v(\pm 1)=0$, $\partial_z v(0)=0$ due to symmetry, and $\partial_z\varphi(0)=0$ as initial data, integration by parts yields
\begin{align}
 &\quad \  \ \, \frac{1}{\mathrm{cosh}^2(c)} \varphi(1)\,\partial_zv(1)  \nonumber \\
 &= \Big [ \frac{1}{\mathrm{cosh}^2(cz)} \varphi(z) \,\partial_z v(z)-\frac{1}{\mathrm{cosh}^2(cz)} \partial_z\varphi(z) \,v(z) \Big ]_{z=0}^{z=1} \nonumber \\
 &= \int_0^1 \bigg(\partial_z \Big( \frac{1}{\mathrm{cosh}^2(cz)} \partial_z v(z) \Big) \varphi(z) - \partial_z \Big ( \frac{1}{\mathrm{cosh}^2(cz)} \partial_z\varphi(z) \Big) v(z)\bigg) \,\mathrm{d} z\,. \label{LagrangeIdentity0}
\end{align}
Adding $\pm \displaystyle\frac{c^2}{\mathrm{cosh}^2(cz)} v(z)\varphi(z)$ to \eqref{LagrangeIdentity0} and using the differential equation for $\varphi$ we see that
\begin{align}
 \quad \  \ \, \frac{1}{\mathrm{cosh}^2(c)} \varphi(1)\,\partial_z v(1) 
 &= \frac{1}{\sigma^2} \,\int_0^1 \big(DF(u_{in}^0)v\big )(z)\,\varphi(z) \, \mathrm{d} z \nonumber \\
 &= -\frac{1}{\sigma^2}\,\int_0^1  f(z) \varphi(z) \, \mathrm{d} z \nonumber \\
 &= - \frac{1}{2\sigma^2} \,\int_{-1}^1  f(z) \varphi(z) \, \mathrm{d} z> 0\,.  \label{LagrangeIdentitaet}
\end{align}
We deduce from \eqref{LagrangeIdentitaet} and $\varphi(1) < 0$ that $\partial_z v(1) < 0$. Since $v(\pm1)=0$, it follows that $v$ is non-negative close to $z=1$, and consequently $I_-\neq(-1,1)$. Hence, we have $I_-=(-r_0,r_0)$ for some $0 < r_0 <1$. We note that also $\partial_z v(-1)>0$ by symmetry. To show that $v$ is strictly positive on $(-1,-r_0) \cup (r_0,1)$, we assume for contradiction that $v(z_0)=0$ for some $z_0$ with $r_0 < \vert z_0 \vert <1$. 
Since $v \geq 0$ close to $z_0$, it follows that $v$ has a local minimum at $z_0$ and hence necessarily $\partial_zv(z_0)=0$. 
But then we find that
\begin{align}
0 &> -\frac{1}{\sigma^2}  f(z_0) \notag \\ 
&= \frac{1}{\sigma^2} \big [DF(u_{in}^0)v \big ](z_0)\notag \\
&= \partial_z \Big ( \frac{1}{\mathrm{cosh}^2(cz)} \Big ) \Big \vert_{z=z_0} \partial_zv(z_0) + \frac{1}{\mathrm{cosh}^2(cz_0)} \partial_{zz} v(z_0) \notag \\
&\ \ \ \ + \frac{c^2}{\mathrm{cosh^2}(cz_0)} v(z_0) \notag \\
&=\frac{1}{\mathrm{cosh}^2(cz_0)} \partial_{zz} v(z_0)\,, \label{Auslenkung13a}
\end{align}
i.e. $v$ has a strict local maximum at $z_0$ which is not possible.  
\end{proof}

\bibliographystyle{siam}
\bibliography{BibliographyDoc}
\end{document}